\newcommand{\private}[1]{}
\renewcommand\l@subsection{\@tocline{2}{0pt}{2pc}{5pc}{}}
\newcommand{\R}{{\mathbb R}}
\newcommand{\Emb}{\operatorname{Emb}}
\newcommand{\K}{{\mathcal{K}}}
\newcommand{\Lk}{{\mathcal{L}}}
\newcommand{\Br}{{\mathcal{B}}}
\newcommand{\LD}{{\mathcal{LD}}}
\newcommand{\BD}{{\mathcal{BD}}}
\newcommand{\ord}{\operatorname{ord}}
\newcommand{\degree}{\operatorname{deg}}
\theoremstyle{plain}
\newtheorem{thm}{Theorem}[section]
\newtheorem{prop}[thm]{Proposition}
\newtheorem{conj}[thm]{Conjecture}
\theoremstyle{definition}
\newtheorem{defin}[thm]{Definition}
\theoremstyle{remark}
\newtheorem{rem}[thm]{Remark}
\newcommand{\refT}[1]{Theorem~\ref{T:#1}}
\newcommand{\refP}[1]{Proposition~\ref{P:#1}}
\newcommand{\refD}[1]{Definition~\ref{D:#1}}
\begin{document}


\title[Cohomology of links via configuration space integrals]{On the cohomology of spaces of links and braids via configuration space integrals}


\author{Ismar Voli\'c}
\address{Department of Mathematics, Wellesley College, Wellesley, MA}
\email{ivolic@wellesley.edu}
\urladdr{http://palmer.wellesley.edu/\~{}ivolic}

\subjclass{Primary: 57M27; Secondary: 81Q30, 57R40}
\keywords{Bott-Taubes integrals, configuration space integrals, spaces of links, finite type invariants, Fulton-MacPherson compactification}

\thanks{The author was supported in part by the National Science Foundation grant DMS 0805406.}


\begin{abstract}
We study the cohomology of spaces of string links and braids in $\R^n$ for $n\geq 3$ using configuration space integrals.  For $n>3$, these integrals give a chain map from certain diagram complexes to the deRham algebra of differential forms on these spaces.  For $n=3$, they produce all finite type invariants of string links and braids. 
\end{abstract}

\maketitle

\tableofcontents

\parskip=4pt
\parindent=0cm


\section{Introduction}\label{S:Intro}


The goal of this paper is to establish and study configuration space integrals for spaces of string (or long) links\footnote{The reader should keep in mind that when we say string links we do not mean {\it homotopy} string links as is sometimes the case in literature; see \refD{MainDef}.} and pure braids in $\R^n$ for $n\geq 3$. 
This should be thought of as a generalization of the study of such integrals in the case of knots and long knots.
Namely, in \cite{CCRL}, Cattaneo, Cotta-Ramusino, and Longoni define a cochain map
\begin{equation}\label{E:CCRLMap}
\mathcal{D}\longrightarrow \Omega^*\K^n
\end{equation}
between a certain diagram complex $\mathcal{D}$ and the the deRham complex of space of knots (and long knots)  $\K^n$ in $\R^n$ for $n>3$
 via configuration space integrals, defined essentially as the following:  A diagram $\Gamma\in\mathcal{D}$ is used as a prescription for pulling back a product of volume forms on $S^{n-1}$ to a space $C[k,s;\K^n, \R^n]$, which is a bundle over $\K^n$ whose fiber over $K\in\K^n$ is the compactified configuration space of $k+s$ points in $\R^n$, first $k$ of which are constrained to lie on $K$.  This form can then be pushed forward to $\K^n$, i.e.~integrated along the fiber of the map $C[k,s;\K^n, \R^n]\to\K^n$.

Cattaneo, Cotta-Ramusino, and Longoni have further shown that, for any $k$, space of knots has cohomology classes in arbitrarily high degrees in \cite{CCRL:Struct} by studying certain algebraic structures on $\mathcal{D}$ that correspond to those in the cohomology ring of spaces of knots.  Longoni also proved in \cite{Long:Classes} that some of these classes arise from non-trivalent diagrams.  An analog for long knots was given in \cite{Sakai:Nontrivalent} by Sakai who combined the configuration space integrals with Budney's action of the little discs operad on $\K^n$ \cite{Bud:LCLK}.  Sakai has further defined configuration space integrals for embeddings of long planes $\Emb(\R^j, \R^n)$ \cite{Sakai:BTLongPlanes}.  In another recent work, Koytcheff \cite{Koyt:HomotopyBT}  has reinterpreted configuration space integrals in a homotopy-theoretic way that does not use integration.  The advantage is that he obtains integral, rather than real, classes in the cohomology of knots.

The results in \cite{CCRL} are in many respects a culmination of work begun by Guadagnini, Martellini, and Mintchev \cite{GMM} and Bar-Natan \cite{BN:Thesis} and further developed by Bott and Taubes \cite{BT} (in literature, configuration space integrals are often referred to as ``Bott-Taubes integrals") and D. Thurston \cite{Thurs}.  Kontsevich had also studied similar integration techniques in \cite{K:OMDQ}. The
focus of Bott-Taubes and D. Thurston's work was knots in the classical dimension $n=3$, and  the main results in knot theory which uses configuration space integrals is that they represent a universal finite type knot invariant, due to D. Thurston (see also \cite{V:SBT}). 

In the present work, we generalize both the results of Cattaneo-Cotta-Ramusino-Longoni and D. Thurston.  To elaborate, we first need to define the spaces in question precisely.

\begin{defin}\label{D:MainDef}  For $n\geq 3$, define
\begin{itemize}
\item  \emph{Space of string links, or long links, of $m$ components in $\R^n$}, denoted by $\Lk_{m}^{n}$, to be the space of embeddings $\Emb(\coprod_{m}\R, \R^n)$ of the disjoint union of $m$ copies of $\R$ in $\R^n$ that agree, outside of $\{-1,1\}\times \R^{n-1}$, with some fixed linear embedding of $\coprod_{m}\R$ in $\R^n$ which maps each $\R$ to a line parallel to the line $(x,0,0,...,0)$ in $\R^n$;
\item \emph{Space of ``flattened" pure braids of $m$ components in $\R^n$}, denoted by $\Br_{m}^{n}$, to be the subspace of the space of embeddings $\Emb(\coprod_{m}\R, \R^n)$ defined above determined by those embeddings whose normalized derivative on each of the copies of $\R$ 
\begin{enumerate}
\item has positive first component, and further
\item misses some fixed small neighborhoods $U_N$ and $U_S$ of the north and south poles, respectively, of the standard sphere in $ \R^{n}$.
\end{enumerate}

\end{itemize}
%

\end{defin}
Note that if one takes away condition $(2)$, then the second space is the standard space of pure braids of $m$ components.  Its subspace $\Br_{m}^{n}$ is very closely related to it since it is immediate that any pure braid is isotopic to a braid in $\Br_{m}^{n}$.  The isotopy is given by ``flattening" the braid as much as necessary so that the tangent vector always misses $U_N$ and $U_S$.  This is of course not always possible with links in $\Lk_{m}^{n}$ because condition $(1)$ is not satisfied.

We will usually refer to ``long links" simply as ``links" and to elements of $\Br_{m}^{n}$ as ``braids'' througout.  In fact, we will often simply say ``links" for either space and will make the distinction clear when necessary.  Both links and braids are interesting in its own right and have been studied extensively.  More about our motivation for studying them is given below.

Our main results can now be summarized as 
\begin{thm}\label{T:MainThmIntro} For each $o>0$, $d\geq 0$, and $n> 3$,
there exist  diagram complexes $\LD^{o,d}$
and $\BD^{o,d}$ and cochain maps 
\begin{align*}
\LD^{o,*} & \longrightarrow \Omega^{(n-3)o+*}(\Lk_{m}^{n}) \\
 \BD^{o,*} & \longrightarrow \Omega^{(n-3)o+*}(\Br_{m}^{n})
\end{align*}
given by configuration space integrals.  For $n=3$, these maps give isomorphisms between $H^0(\LD^{o,0})$
and $H^0(\BD^{o,0})$ and finite type invariants of order $o$ of those spaces.
\end{thm}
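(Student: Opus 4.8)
The plan is to prove the two halves of \refT{MainThmIntro} separately, adapting the Bott--Taubes/Cattaneo--Cotta-Ramusino--Longoni machinery of \cite{CCRL} to the multi-strand setting and, in the classical dimension, invoking a D.~Thurston-style universality argument. First I would set up the diagram complexes: a generator of $\LD^{o,d}$ (resp.\ $\BD^{o,d}$) is a graph whose vertices are partitioned into \emph{segment vertices}, distributed along $m$ oriented strands labeled by the link components (and, for $\BD^{o,d}$, compatible with the directional constraint of \refD{MainDef}), and \emph{free vertices} in $\R^n$, subject to valence and orientation/parity conditions, with the bigrading recording the order $o$ and cohomological degree $d$ and the differential contracting an edge or collapsing an arc of a strand. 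To each diagram $\Gamma$ I associate the pullback bundle $C[\,\vec{k},s;\Lk_m^n,\R^n\,]\to\Lk_m^n$ whose fiber over $L$ is the Fulton--MacPherson/Axelrod--Singer compactification of the configuration space of segment points on $L$ together with free points in $\R^n$; each edge gives a Gauss map to $S^{n-1}$, I pull back the unit volume form along every edge, take the product over edges, and push forward along the fiber. A dimension count (fiber dimension against $(n-1)$ times the number of edges) yields a form of degree $(n-3)o+d$, and summing over all diagrams of fixed $(o,d)$ defines the candidate cochain map.

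The cochain property is the generalized Stokes formula for fiberwise integration, $d\!\int_{\mathrm{fiber}}=\int_{\mathrm{fiber}}d\;\pm\;\int_{\partial}$: the first term vanishes because volume forms on spheres are closed, the codimension-one principal faces (two points colliding) reproduce exactly the contraction differential, and every remaining face of the compactification must integrate to zero. \textbf{The main obstacle for $n>3$} is precisely this vanishing of the ``hidden'' faces in the presence of several strands: one reruns the Bott--Taubes/Kontsevich/Thurston arguments — dimension counting, the involution trick that kills faces with a single lonely free vertex, and vanishing at infinity forced by the prescribed linear behavior of links and braids outside a compact set — but one must verify that neither the several strands nor the extra north/south-pole constraint cutting out $\Br_m^n$ produces new nonvanishing contributions, handling in particular the anomalous (global collapse) face via the long/string normalization. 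Matching signs between the principal-face sum and the combinatorial differential is the residual bookkeeping.

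For $n=3$ the target is $\Omega^{d}(\Lk_m^3)$, so a degree-$0$ cocycle in $\LD^{o,0}$ is sent to a closed $0$-form, i.e.\ a locally constant function on $\pi_0$, i.e.\ a numerical invariant of string links (and likewise for braids). I would first show such an invariant has finite type of order at most $o$ by evaluating it on a string link with $o+1$ transverse double points, using the Vassiliev skein relation together with the localization of the configuration space integral near the double points: the alternating sum demands $o+1$ prescribed short chords among only $o$ available chord slots, so the integrand vanishes. This gives a well-defined map $H^0(\LD^{o,0})\to\{\text{finite type invariants of order }o\}$. To identify it as an isomorphism I would compute the symbol: restricting the integral to a link with exactly $o$ double points isolates the principal contribution, which by a D.~Thurston-type evaluation equals the pairing of the diagram cocycle with the chord diagram of the double points, up to a nonzero universal constant. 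This simultaneously yields injectivity and identifies $H^0(\LD^{o,0})$ (and $H^0(\BD^{o,0})$) with the relevant space of admissible $\R$-valued weight systems on order-$o$ diagrams, i.e.\ the links/braids analogue of the $1T$/$STU$/$IHX$ relations.

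Surjectivity is the universality statement and is \textbf{the hard part}: given a finite type invariant $v$ of order $o$ with weight system $W$, one must produce an actual configuration space integral with the same symbol. For $n=3$ a bare trivalent-diagram integral is not itself an invariant, so this requires the links/braids version of D.~Thurston's theorem — choosing an $STU$-closed combination of trivalent diagrams representing $W$ and correcting it by the Bott--Taubes anomaly term so that the individually non-invariant pieces assemble into an honest invariant with symbol $W$. Then $v$ minus this integral has order at most $o-1$, and one closes the argument by induction on $o$, the base case being that order-zero invariants are constants, which are hit by the empty diagram. Combining this with the symbol computation of the previous paragraph gives that the map is both injective and surjective onto finite type invariants of order $o$, completing the proof.
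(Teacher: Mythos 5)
Your proposal follows essentially the same route as the paper: CCRL-style configuration space integrals from a multi-strand diagram bicomplex, Stokes' theorem with principal faces matching the contraction differential and vanishing of hidden faces and faces at infinity for $n>3$, and for $n=3$ a Thurston-style argument via trivalent diagrams, the anomaly correction, the symbol computation on singular links, and the evaluation map on weight systems. The only cosmetic difference is ordering and packaging (you defer the anomaly correction to the surjectivity step and phrase surjectivity as an induction on order, where the paper builds the corrected maps $\overline{I}_{\Lk}$, $\overline{I}_{\Br}$ first and then exhibits the evaluation map $\mathcal{LV}_k\to\mathcal{LW}_k$ as a two-sided inverse, giving the graded isomorphism directly), which amounts to the same argument.
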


One immediately has the following 
\begin{conj}  The cochain maps from \refT{MainThmIntro} are quasi-isomorphisms for $n>3$. 
\end{conj}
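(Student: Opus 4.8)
The strategy I would pursue is to factor the conjecture through Goodwillie--Weiss embedding calculus and Kontsevich's operad formality, just as in the long-knot case. First I would recall that, since the codimension $n-1$ is at least $3$, the Taylor tower of the embedding functor converges: the canonical map $\Lk_m^n \to \holim_k T_k$, and likewise for $\Br_m^n$, is a weak equivalence by the Goodwillie--Klein connectivity estimates together with the multivariable manifold calculus models for spaces of string links of Munson--Volic. Dualizing, one obtains a (poly)cosimplicial cochain model for $\Omega^*(\Lk_m^n)$ and $\Omega^*(\Br_m^n)$ whose $p$-th term is built from the configuration space of $p$ points in $\R^n$ subject to the boundary and strand constraints, hence a convergent spectral sequence abutting to $H^*(\Lk_m^n;\R)$, respectively $H^*(\Br_m^n;\R)$.

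Next I would identify the $E_1$-page of this spectral sequence, after the usual normalization and the degree shift by $(n-3)o$, with the diagram complexes $\LD^{o,*}$ and $\BD^{o,*}$: the chords and internal vertices of a diagram index the configuration points, the order $o$ records the filtration level, the monoidal ``stacking'' structure on $\Lk_m^n$ and $\Br_m^n$ matches the algebra structure on the diagram complexes, and the differential of $\LD^{o,*}$ --- expanding and contracting edges, pushing vertices off the ends of a strand --- is dual to the cosimplicial coface maps. Then comes the essential input: Kontsevich's formality of the little $n$-discs operad $\mathcal{C}_n$, together with the formality of the relative and module data governing the one-dimensional strands, forces the spectral sequence to degenerate at $E_2$ over $\R$. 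This is the mechanism by which Lambrechts--Turchin--Volic computed the rational homology of long knots in codimension $>2$, and it is also where the hypothesis $n>3$ (rather than $n=3$) is genuinely used. Consequently $H^*(\LD^{o,*})$ and $H^*(\BD^{o,*})$, summed over $o$, compute the associated graded of $H^*(\Lk_m^n;\R)$ and $H^*(\Br_m^n;\R)$ for the weight filtration.

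The crux --- and the reason this remains a conjecture --- is to show that the cochain maps of \refT{MainThmIntro}, defined by the honest Bott--Taubes pushforward of products of $S^{n-1}$-volume forms along $C[k,s;-]$, agree up to filtration-preserving quasi-isomorphism with the map produced by the calculus/formality argument above. Concretely one must: (i) carry out the Stokes-theorem and vanishing-on-hidden-faces analysis showing that the configuration space integral map is compatible with the cosimplicial filtration, i.e.\ that it refines to a map of the Taylor-tower models; (ii) compute its effect on $E_1$ and check that it is the tautological identification of the previous paragraph; and (iii) track the several-strand combinatorics and the stacking product so that the comparison is one of filtered differential graded algebras. Given (i)--(iii), the map induces an isomorphism on $E_2$, both spectral sequences converge, and the standard comparison theorem finishes the proof. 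I expect step (i) --- pinning down that the boundary faces detected by the Bott--Taubes forms are precisely the cosimplicial ones, with all ``anomalous'' faces genuinely contributing zero in the codimension-$\geq 3$ range --- to be the main obstacle, with the polycosimplicial bookkeeping of (iii) a close second.
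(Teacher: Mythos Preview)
The statement under review is a \emph{conjecture} in the paper, not a theorem: the paper offers no proof, only a paragraph of supporting evidence pointing to the knot case (the Vassiliev spectral sequence, its convergence for $n>3$, and its collapse at $E_2$ via \cite{LTV:Vass}) together with the link analogues of \cite{MV:Multi}. There is therefore no ``paper's own proof'' to compare against.

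That said, your proposed strategy is very much in the spirit of the evidence the paper cites. The collapse result \cite{LTV:Vass} that the paper invokes is precisely the Kontsevich-formality argument you describe, and the multivariable Taylor tower you use is the calculus counterpart of the Vassiliev-type spectral sequences of \cite{MV:Multi}. You also correctly flag the genuine obstacles --- your steps (i)--(iii) --- and do not claim to have overcome them; in that sense your write-up is an honest proof \emph{strategy} rather than a proof, which is the appropriate posture for a conjecture.

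Two cautions are worth recording. First, you treat the identification of the $E_1$-page with the diagram complex $\LD^{o,*}$ (and $\BD^{o,*}$) as essentially routine bookkeeping, but the paper is more guarded: it says only that ``it seems likely'' the diagram bicomplex is quasi-isomorphic to the $E_1$-page even in the knot case. So your step (ii) is not merely a check but part of the open problem. Second, your step (i) --- that the Bott--Taubes integration map respects the tower filtration --- is exactly what the paper announces as future work (factoring the integrals through stages of the Taylor multi-tower, following \cite{V:IT} for knots); the paper does not claim this is done for links. So your assessment that (i) is ``the main obstacle'' is consistent with the paper's own account, but neither you nor the paper supplies it.
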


The evidence for this conjecture comes from the case of knots, where it seems likely that the bicomplex of diagrams is quasi-isomorphic to the $E_1$ page of the Vassiliev spectral sequence \cite{Vass:Cohom}.  The latter is known to converge to the cohomology of knots for $n>3$ and collapses at $E_2$ \cite{LTV:Vass}.  Vassiliev-like spectral sequences for spaces of links have been constructed in \cite{MV:Multi}, and the next step   should be to show that they collapse as in the case of knots.


The part of \refT{MainThmIntro} which states that there exist cochain maps is proved later as Theorem \ref{T:MainThm}.
  The result about finite type invariants is \refT{UniversalFiniteType}.  The diagram complexes, defined in section \ref{S:Diagrams}, should be thought of as just a convenient way of keeping track of integrals along the interior and the boundary of compactified configuration spaces.  Before we define the integrals, we will have to do away with one of the more technical aspects of the story.  Namely, in order for integrals to converge, configuration spaces have to be compactified.  This construction is reviewed in section \ref{S:Compactification}, as is the construction of certain spaces analogous to $C[k,s;\K^n, \R^n]$ which fiber over  our link and braid spaces.  We then define the integrals in section \ref{S:Forms} and show that they produce chain maps given in \refT{MainThmIntro}  when $n>3$.  This directly generalizes  the main results from \cite{CCRL} to spaces of links and braids.  The argument is essentially to show that the configuration space integrals vashing along most of the codimension one boundary of the compactified configuration spaces.  

The difference for $n=3$ comes from the fact that the integrals along some faces do not necessarily vanish for all diagrams.  However, this can be fixed 
when $d=0$ (the interesting degree which contains link invariants), and this is done in \refT{n=3CochainMap}.  The diagram complexes  become complexes of \emph{trivalent diagrams} which have been received much attention in recent years \cite{BN:Vass, Long:Classes} and provide a bridge to finite type theory.  We recall some facts about these in section \ref{S:TrivalentDiagrams} and  in section \ref{S:UniversalFiniteType} prove the second part of \refT{MainThmIntro} after reviewing the basics of finite type theory.  This result is the analog of D. Thurston's results from \cite{Thurs} about configuration space integrals classifying finite type invariants for knots.

\vskip 6pt

This purpose of this paper is two-fold:  One the one hand, we provide generalizations to links and braids of the many results involving configuration space integrals for knots, but we do so in a fashion which streamlines and brings together the often disparate literature on the subject.  In particular, the cases of classical knots and knots in codimension $>2$ have often been treated differently and from separate points of view, and our goal is to bring those cases together in this work.

Secondly, we wish to apply the results of this paper in the setting of manifold calculus of functors.  Namely, B. Munson  and the author have initiated in \cite{MV:Links, MV:Multi} the study of spaces of links by defining certain multi-towers of spaces whose stages represent approximations to spaces of links.  In particular, these multi-towers are expected to classify all finite type invariants of links and braids.  The analogous result for knots was established in \cite{V:FTK}.  The key in showing such a classification statement is the extension of configuration space integrals from knots to links and braids, which is precisely what is done in this paper.  This extension can then be further  modified so that the target of the integrals is stages of the multi-towers (for knots, this was done in \cite{V:IT}).  This modification will be addressed in a future paper.  It is expected that factoring configuration space integrals through stages of the Taylor multi-towers will also lead to a new proof that finite type invariants separate braids and homotopy string links as well as new connections to and generalizations of Milnor invariants.


Here are some further questions that immediately arise from the results in this paper:

\textbullet\ If one thinks of $\Br_{m}^{n}$ as $\Omega C(m, \R^{n-1})$, the loop space of the configuration space of $m$ points in $\R^{n-1}$ (to pass to this model, one would require a constant, rather than just positive, derivative in \refD{MainDef}), then one should have another interesting connection between the work here and the work done on the homology of $\Br_{m}^{n}$  by F. Cohen and Gitler \cite{CG} and Kohno \cite{Kohno:LoopsFiniteType}.  In particular, Kohno constructs invariants on $\Br_{m}^{n}$ via integrals which are likely related to ours.  The difference comes from the fact that  we are forced to think of braids as a subspace of a certain space of embeddings, mainly because of \refP{Bundles} and the definitions surrounding it.  One should be able to reconcile the two points of view and in particular simplify the braid diagram complex from section \ref{S:Diagrams} so that the connected components of diagrams have external vertices only along ``vertical slices".  This would reflect the fact that braids can be parametrized with a single loop parameter. If this were possible, configuration space integrals for braids would simplify greatly and the anomaly (see section \ref{S:AnomalousFaces}) would in particular disappear.

\textbullet\ One should also be able to define configuration space integrals for the space of \emph{homotopy} string links \cite{HabLin-Classif, Milnor-Mu}.  The problem, however, is that the most natural definition of this space is as a subspace of the space of immersions of $\coprod\R$ in $\R^n$, in which case it is not clear that the analog of the compactified configuration spaces from section \ref{S:Compactification} for homotopy string links would be a manifold with corners.
%

\textbullet\ It is known that finite type invariants separate braids \cite{Kohno:LoopsFiniteType, BN:HoLink}.  It should be possible to reprove this result using configuration space integrals defined here. 




\section{Diagram complexes}\label{S:Diagrams}



Given integers $n\geq 3$  and $m\geq 1$, we will consider connected diagrams $\Gamma$ consisting of $m$ oriented line segments labeled 1, 2, ..., $m$, with some number of vertices on or off them (see Figure \ref{Fig:DiagramExample}).   A vertex lying on a segment will be called \emph{external} and will otherwise be called \emph{internal}.    Internal vertices are at least trivalent.  Each segment always has two  external vertices at its endpoints.  External vertices on the $j$th segment are labeled $v^{j}_1, v^{j}_2,...$ in linear order.
Internal vertices are also labeled.

\begin{rem}
This terminology comes from diagrams associated to ordinary closed knots where there is only one segment, drawn as a circle.  Vertices that are not on the circle are usually drawn inside it, and are labeled ``internal", while those on the circle are  labeled ``external".
\end{rem}

Vertices may be joined by \emph{edges}.  We identify four types of edges:

\begin{itemize}
\item  \emph{internal edge}, connecting two internal vertices;
\item  \emph{mixed edge}, connecting an internal vertex and an external vertex;
\item  \emph{chord}, connecting two external vertices;
\item  \emph{loop}, connecting an external vertex to itself.
\end{itemize}

From now on, when we say ``edge", we will mean any of the above types of edges unless otherwise specified.

We also identify 
\begin{itemize}
\item  \emph{arcs}, which are parts of line segments between two consecutive external vertices.
\end{itemize}

We will denote an edge or an arc with endpoints $a$ and $b$ by $(a,b)$.

A diagram may not contain an edge connecting an internal vertex to itself.  All edges are oriented.  Further, 
\begin{itemize}
\item \emph{for $n$ even}, edges are labeled;
\item \emph{for $n$ odd},  edges are oriented.
\end{itemize}

A \emph{connected component} of a diagram is a subset of its vertices and edges which is not connected by an edge to any other part of the diagram (we disregard the arcs when identifying connected components). 

Given a diagram $\Gamma$, let 
\begin{itemize}
\item  $|e|=$ number of edges of $\Gamma$;
\item  $|v_{ext}|=$ number of external vertices of $\Gamma$;
\item  $|v_{int}|=$ number of internal vertices of $\Gamma$.
\end{itemize}

\begin{defin}
Define the \emph{order} and \emph{degree} of $\Gamma$ to be 
\begin{align*}
\ord\Gamma & =  |e|-|v_{int}|\\
\degree\Gamma & =   2|e|-3|v_{int}|-|v_{ext}|.
\end{align*}
\end{defin}

\begin{defin}\label{D:DiagramSpaces}
Define 
$\LD^{o,d}_{even}$ and $\BD^{o,d}_{even}$ 
(resp. $\LD^{o,d}_{odd}$ and $\BD^{o,d}_{odd}$)
 to be real vector spaces generated by diagrams $\Gamma$  described above for $n$ even (resp. odd) of order $o$ and degree $d$ 
%
%
%
%
modulo subspaces generated by the relations
\begin{enumerate}
\item  If $\Gamma$ contains more than one edge between two vertices, then $\Gamma=0$;
\item  If $n$ is odd  and $\Gamma$ and $\Gamma'$ differ by a permutation of the internal vertices or ordering of edges,
then $\Gamma=(-1)^{\sigma}\Gamma'$ in $\LD^{o,d}_{odd}$ and $\BD^{o,d}_{odd}$, where $\sigma$ is the sum of the order  of the permutation and  the number of edges whose orientation is different;
\item  If $n$ is even and $\Gamma$ and $\Gamma'$ differ by a permutation of the edge labels, then $\Gamma=(-1)^{\sigma}\Gamma'$ in $\LD^{o,d}_{even}$ and $\BD^{o,d}_{even}$, where $\sigma$ is the order of the permutation of the edges; 
\end{enumerate}
and additionally in the case of $\BD^{o,d}_{even}$ and $\BD^{o,d}_{odd}$,
\begin{itemize}
\item[(4)] If $\Gamma$ contains a loop, or a chord connecting external vertices on a single segment, then $\Gamma=0$.
\end{itemize}





Define $\LD_{even}$, $\BD_{even}$, $\LD_{odd}$, and $\BD_{odd}$ to be direct sums of vector spaces above for all $o$ and $d$.

\end{defin}

An example of a diagram in $\LD_{odd}$ is given in Figure \ref{Fig:DiagramExample}.  In case when there is only one line segment, this gives precisely the complex associated to knots defined in \cite{CCRL}.  The last relation corresponds to the fact that braids always ``flow" in one direction.

When there is no danger of confusion, we will drop the subscripts and refer to our diagram spaces as $\LD$ and $\BD$ and will only make comments about parity when necessary.

%


\begin{figure}[h]
\input{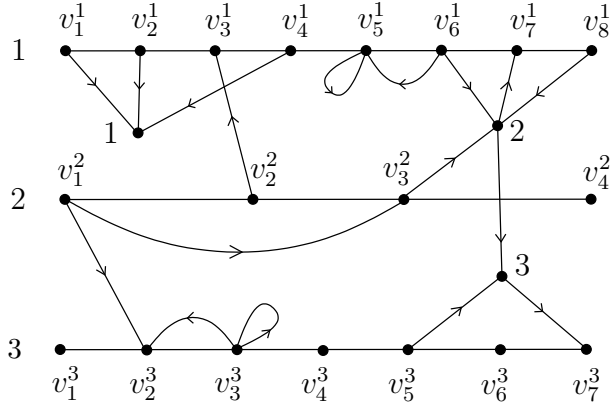}
\caption{Example of an element of $\LD_{odd}$ for $m=3$.  This is not an element of $\BD_{odd}$ because of the loops and chords $(v_1^5, v_1^5)$, $(v_3^3, v^3_3)$, $(v^1_5, v^1_6)$, $(v^2_1, v^2_3)$, and $(v^3_2, v^3_3)$.}
\label{Fig:DiagramExample}
\end{figure}


%


The coboundary operator will be defined via contraction of edges.  More precisely, let $e$ be a internal edge, a mixed edge, or an arc in a diagram $\Gamma$ and define $\Gamma/e$ to be the graph obtained by contracting $e$.  The labels in the new diagram are as follows:

\begin{itemize}
\item Orientations of edges other than $e$ remain unchanged;
\item The vertex that remains after contraction retains the higher of the two endpoint labels;
\item If a vertex (resp. edge) has a label higher than the label of the vertex  that remains after contraction (resp. label of $e$), its label is reduced by one. 
\end{itemize}

\begin{defin}  
Define the differential $\delta$ on $\LD$ and $\BD$ as a linear extension of \begin{equation}
\delta(\Gamma)= \sum_{
\text{internal edges, mixed edges, or arcs $e$ of $\Gamma$}}(-1)^{\epsilon(e)}\Gamma/e.
\end{equation} 
where
 $\epsilon(e)$ is a sign given as follows:

\begin{itemize}

\item Suppose $n$ is odd and $e$ in an edge or an arc, or suppose $n$ is even and $e$ is an arc (but not an edge).  Suppose $e$ connects vertex $i$ to vertex $j$ according to the orientation of the edge or arc.  Then
\begin{equation}\label{E:OddSign}
\epsilon(e)=\begin{cases}
(-1)^{j}, & j>i,  \\
(-1)^{i+1},  & j<i.
\end{cases}
\end{equation}

\item Suppose $n$ is even and $e$ is an edge.  Then 
\begin{equation}\label{E:EvenSign}
\epsilon(e)=(-1)^{(\text{label of $e$})+|v_{ext}|+1}. 
\end{equation}

\end{itemize}

\end{defin}

 This differential will later correspond precisely to Stokes' Theorem and 
integration over faces of compactified configuration spaces.


\begin{thm}\label{T:DiagramsAreComplexes}
The map $\delta$ is well-defined and gives a coboundary operator on $\LD$ and $\BD$ with respect to the grading by degree $d$. 
\end{thm}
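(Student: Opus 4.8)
The plan is to verify two things: first, that $\delta$ is well-defined, i.e.\ independent of the choices implicit in writing down a representative of a class in $\LD$ or $\BD$ (labellings of vertices and edges, orientations of edges, and the relations (1)--(4)); and second, that $\delta^2 = 0$, and that $\delta$ raises the degree $d$ by one while preserving the order $o$. The degree and order bookkeeping is the easy part: contracting an internal edge removes one edge and one internal vertex, so $\ord$ is unchanged and $\degree$ changes by $2(-1) - 3(-1) - 0 = +1$; contracting a mixed edge removes one edge and turns an internal vertex into nothing while merging two external vertices into one, so $|e|$ drops by $1$, $|v_{int}|$ drops by $1$, $|v_{ext}|$ drops by $1$, giving $\Delta\ord = 0$ and $\Delta\degree = -2+3-(-1) = +1$... wait, one must be careful: contracting a mixed edge merges an internal and an external vertex into a single external vertex, so $|v_{int}|$ drops by $1$, $|v_{ext}|$ is unchanged, and $\Delta\degree = -2 + 3 - 0 = +1$. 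Contracting an arc merges two external vertices into one, so $|v_{ext}|$ drops by $1$, $|e|$ and $|v_{int}|$ are unchanged, and $\Delta\degree = -(-1) = +1$. In all three cases $\ord$ is preserved and $\degree$ goes up by one, and one should also check that contraction cannot produce an internal vertex of valence $<3$ without the resulting diagram being killed by relation (1) or by a double-edge collapse — this needs a short case check.

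For well-definedness, I would argue one relation at a time. The content is that the sign conventions $\epsilon(e)$ in \refE{OddSign} and \refE{EvenSign} were designed precisely so that $\delta$ is compatible with the sign relations (2) and (3): if $\Gamma' = \pm\Gamma$ after a permutation of internal vertices (odd case) or edge labels (even case), one checks termwise that $\Gamma'/e$ is the same diagram as $\Gamma/e$ with the sign twisted by exactly the corresponding $(-1)^\sigma$, using that contraction reduces all higher labels by one. For relation (1): if $\Gamma$ already has a double edge it is zero, and one must check that $\delta(\Gamma)$ is consistently zero, i.e.\ any term $\Gamma/e$ that is created also has a double edge or is otherwise killed; and conversely, contracting an edge $e$ of a good diagram $\Gamma$ can create a double edge only when there was a triangle on $e$, and those terms are declared zero — consistent. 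For relation (4) (braids): contracting an arc, a mixed edge, or an internal edge cannot create a loop or a chord on a single segment out of a diagram that had none, because no such contraction introduces a new external--external adjacency along one segment; so $\delta$ descends to $\BD$. Finally, one must check that $\delta$ does not depend on the cyclic/linear ambiguity in labelling the external vertices on a segment — but the labelling is fixed to be the linear order along the oriented segment, so there is no ambiguity beyond what relations (2),(3) already handle.

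For $\delta^2 = 0$, the standard strategy applies: expand $\delta^2(\Gamma)$ as a sum over ordered pairs $(e, f)$ of contractible edges/arcs, where after contracting $e$ one contracts the image $\bar f$ of $f$. The terms come in pairs $(e,f)$ and $(f,e)$ producing the same underlying diagram $\Gamma/\{e,f\}$, and the whole point of the sign rules is that the two signs $\epsilon(e)\cdot\epsilon_{\Gamma/e}(\bar f)$ and $\epsilon(f)\cdot\epsilon_{\Gamma/f}(\bar e)$ are opposite, so they cancel. One subtlety: if $e$ and $f$ share a vertex, contracting $e$ then $\bar f$ gives a triple contraction, and these also pair up $(e,f)\leftrightarrow(f,e)$; and one must handle the case where $e$ and $f$ together with a third edge form a triangle — then contracting $e$ turns $f$ into (half of) a double edge, so $\Gamma/e$ either is zero already or $\bar f$-contraction creates a double edge and the term is zero; these degenerate terms must be checked to cancel or vanish in matched pairs. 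I would organize this as: (a) disjoint $e,f$; (b) adjacent $e,f$ not in a common triangle; (c) $e,f$ in a common triangle. The main obstacle is case (c) combined with the even-$n$ sign rule \refE{EvenSign}, which depends on $|v_{ext}|$ and edge labels in a way that shifts under contraction — tracking that the $|v_{ext}|+1$ and label shifts conspire correctly across the double contraction, and across the even/odd dichotomy for whether $e$ or $f$ is an arc versus an edge (the sign rule is genuinely different in those cases), is where the real care is needed. This is exactly the analogue of the corresponding verification for the knot complex of \cite{CCRL}, and I would follow that template, doing the link case first and then noting that passing to $\BD$ only removes diagrams and hence does not affect $\delta^2=0$.
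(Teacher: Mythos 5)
Your overall route is in substance the same as the paper's: the paper proves this theorem simply by citing Theorem 4.2 of \cite{CCRL} (the knot case), remarking that the only change is to regard the external-vertex labels as partitioned into $m$ subsets, one per segment; what you have written out is essentially a reconstruction of that CCRL verification (degree/order bookkeeping, compatibility of $\epsilon(e)$ with relations (2)--(3), and $\delta^2=0$ by pairing the contractions $(e,f)$ and $(f,e)$, with the triangle/double-edge degenerations handled by relation (1)). Your bookkeeping is correct: in all three cases of contraction the order is preserved and the degree rises by one, matching the paper's remark that $\delta$ does not affect the order.

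One step, however, is wrong as stated: your justification that $\delta$ descends to $\BD$. You claim that contracting an arc, a mixed edge, or an internal edge ``cannot create a loop or a chord on a single segment out of a diagram that had none.'' This is false: if an internal vertex $w$ is joined by mixed edges to two external vertices $v_1,v_2$ on the same segment, contracting $(v_1,w)$ turns $(w,v_2)$ into a chord on that single segment; likewise, contracting the arc between two consecutive external vertices joined by a chord turns that chord into a loop. Fortunately this is not what descent requires. Since $\BD$ is the quotient of the free space (modulo (1)--(3)) by the subspace spanned by diagrams containing a loop or a single-segment chord, you only need the opposite implication: every contraction of a diagram already killed by relation (4) still contains a loop or a single-segment chord (loops are never contracted, a surviving single-segment chord stays on its segment, and if its two endpoints are merged by contracting the arc between them it becomes a loop). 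With that check, $\delta$ preserves the relation subspace, it descends, and $\delta^2=0$ on $\BD$ follows from the case of $\LD$ exactly as you say. With this correction your argument goes through and agrees with the verification the paper delegates to \cite{CCRL}.
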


\begin{proof} This is precisely the content of Theorem 4.2 in
\cite{CCRL}.
The only adjustment is to think of the set of labels of external vertices in that theorem as partitioned into $m$ subsets (corresponding to  external vertices now lying on $m$ different segments rather than on only one). 
\end{proof}

\begin{rem}  Note that $\delta$ does not affect the order of a diagram.
\end{rem}

\begin{rem}  Cattaneo, Cotta-Ramusino, and Longoni define an algebra structure on their diagram complex via shuffle product of external vertices.  This product turns the complex into a differential graded Hopf algebra \cite[Theorem 3.2]{CCRL:Struct}.  The same can be done to our diagram complexes.  The shuffle product is simply defined on external vertices on each segment separately.  Thus if the sets of external vertices of $\Gamma_1$ and $\Gamma_2$ are $(V_{i_1}, V_{i_2}, ..., V_{i_m})$ and $(V_{j_1}, V_{j_2}, ..., V_{j_m})$ respectively, then a shuffle would be an $m$-tuple 
$$
(\sigma_1(V_{i_1}\cup V_{j_1}), \sigma_2(V_{i_2}\cup V_{j_2}), ..., \sigma_m(V_{i_m}\cup V_{j_m}))
$$
where each $\sigma_k$ is a permutation of the union  $V_{i_k}\cup V_{j_k}$ which preserves the linear order of elements of $V_{i_k}$ and $V_{j_k}$.
\end{rem}


\section{Configuration space integrals}\label{S:B-TIntegrals}



\subsection{Bundles of compactified configuration spaces over links}\label{S:Compactification}


Since we need to integrate over configuration spaces of points in $\R^n$, which are open and thus bring convergence of integrals into question, we will instead use their Fulton-MacPherson compactifications \cite{FM, AS}.  In these compactifications, configuration points are allowed to come together, while their directions and relative rates of approach are kept track of.  The resulting spaces are compact manifolds with corners which are homotopy equivalent to the open configuration spaces.  The original definition of the compactification replaces each diagonal in the product of copies of $\R^n$ by their blowups. We will use an alternative one which does not depend of blowups, due to Kontsevich and Soibelman \cite{KoSo} and Sinha \cite{S:Compact}.

Fix $n\geq 3$ and let $C_0(p, \R^n)$ denote the configuration space of $p$ points $x_1, x_2, ..., x_{p}$ in $\R^n$ (thus $x_i\neq x_j$ for all $i\neq j$).  
For points $x_i$, $x_j$, and $x_k$, with $1\leq i<j<k\leq p$, let 
$$
v_{ij}=\frac{x_j-x_i}{|x_j-x_i|}\in S^{n-1},\ \ \ \ \ a_{ijk}=\frac{|x_i-x_j|}{|x_i-x_k|}\in [0,\infty],
$$
where $[0,\infty]$ is the one-point compactification of $[0,\infty)$.  We then have a map
\begin{align}
\gamma\colon   C(p, \R^n) & \longrightarrow (\R^n)^p\times (S^{n-1})^{p \choose 2}  \times  [0,\infty]^{p  \choose 3}\label{E:CompactificationMap}  \\
             (x_1, ..., x_p) & \longmapsto (x_1, ..., x_p, v_{12}, ..., v_{ij}, ..., v_{(p-1)p}, a_{123}, ..., a_{ijk}, ..., a_{(p-2)(p-1)p}).\notag
\end{align}
\begin{defin}
Define $C[p, \R^n]$ to be the closure of $\gamma(C(p, \R^n))$ in $(\R^n)^p\times (S^{n-1})^{{p \choose 2}}\times  [0,\infty]^{p  \choose 3}$.  
\end{defin}
\begin{thm}[\cite{S:Compact}, Corollary 4.5 and Lemma 4.12]\label{T:DevTheorem} Space $C[p, \R^n]$ is a manifold with corners homotopy equivalent to the  Fulton-MacPherson compactification of $C(p, \R^n)$.
\end{thm}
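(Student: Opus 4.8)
\emph{Proof sketch.} The plan is to establish the two assertions separately: that $C[p,\R^n]$ carries the structure of a manifold with corners, and that it is homotopy equivalent to the Fulton--MacPherson compactification (call it $C_p[\R^n]$). Since $\gamma$ restricts to an embedding of $C(p,\R^n)$ (already the $(\R^n)^p$-coordinates are injective), the interior of $C[p,\R^n]$ will be $C(p,\R^n)$, and the work is entirely along the boundary.

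First I would identify the limit points of $\gamma(C(p,\R^n))$. A sequence of configurations that converges in the compact space $(\R^n)^p\times(S^{n-1})^{\binom p2}\times[0,\infty]^{\binom p3}$ but escapes $C(p,\R^n)$ must have one or more clusters of points colliding; recording, for each cluster, the limiting directions $v_{ij}$ and ratios $a_{ijk}$ of its sub-clusters organizes the collision data into a rooted tree ("screen", or nested family) on $\{1,\dots,p\}$. I would show that $C[p,\R^n]$ is precisely the union, over such trees $T$, of strata $C_T$, where $C_T$ is a bundle whose fibre is a product, indexed by the internal nodes of $T$, of configuration spaces (modulo translation and scaling) of the children of that node. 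Proving that every such formal collision datum is actually realized as a limit, and that no further relations among the $v_{ij}$ and $a_{ijk}$ are imposed, is the combinatorial heart of the matter and I expect it to be the main obstacle.

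Given the stratification, the manifold-with-corners charts are built as follows. Near a point of $C_T$, introduce for each internal node $\nu$ of $T$ a \emph{scale} coordinate $t_\nu\in[0,\varepsilon)$ measuring how tightly $\nu$'s cluster is collapsed relative to its parent, together with (open) coordinates on the configuration space of the children of each node; a neighbourhood of the point is then diffeomorphic to $\R^k\times[0,\varepsilon)^\ell$, where $\ell$ is the number of internal nodes, so $C_T$ has codimension $\ell$. One then checks that two such charts, attached to trees that refine one another, overlap smoothly, which amounts to checking that rescaling coordinates by the $t_\nu$ and passing between nested families is smooth; this is exactly Corollary~4.5 of \cite{S:Compact}.

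Finally, for the homotopy equivalence with $C_p[\R^n]$: the iterated blow-up of $(\R^n)^p$ along the diagonals $\Delta_S$ (in any order refining "blow up lower-dimensional diagonals later") carries, by the universal property of blow-ups, canonical maps to the normal sphere bundle of each $\Delta_{ij}$ and to the $[0,\infty]$-compactified ratio attached to each pair $\Delta_{ij}\supseteq\Delta_{ijk}$; these assemble into a map $C_p[\R^n]\to(\R^n)^p\times(S^{n-1})^{\binom p2}\times[0,\infty]^{\binom p3}$ extending $\gamma$. It has image in $C[p,\R^n]$ and is surjective onto it because $C(p,\R^n)$ is dense in both, and one verifies injectivity and the local-diffeomorphism property stratum by stratum against the tree description above, so it is a diffeomorphism of manifolds with corners (Lemma~4.12 of \cite{S:Compact}), hence in particular a homotopy equivalence. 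Alternatively, one can avoid blow-ups entirely and observe directly that $C[p,\R^n]$ deformation retracts onto its interior $C(p,\R^n)$ by pushing collapsed clusters radially apart along a boundary collar, that the same holds for $C_p[\R^n]$, and that the common dense open subset then forces the two compactifications to be homotopy equivalent. $\qed$
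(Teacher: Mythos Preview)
The paper does not give its own proof of this theorem; it simply imports the result from Sinha \cite{S:Compact}, citing Corollary~4.5 and Lemma~4.12 there, and moves on. So there is nothing in the paper to compare your argument against.

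That said, your sketch is a faithful outline of Sinha's proof: the stratification of $C[p,\R^n]$ by nested families (trees), the scale-coordinate charts $\R^k\times[0,\varepsilon)^\ell$ establishing the manifold-with-corners structure, and the identification with the Fulton--MacPherson compactification via an explicit map extending $\gamma$. One small correction: Sinha actually proves a \emph{homeomorphism} (in fact diffeomorphism of manifolds with corners) between $C[p,\R^n]$ and the Fulton--MacPherson compactification, which is stronger than the homotopy equivalence stated here; your sketch reflects this correctly when you say the comparison map is a diffeomorphism. Your alternative paragraph (deformation retract onto the common interior) is weaker than what is needed elsewhere in the paper, since the boundary stratification is used essentially in the pushforward arguments, but it does suffice for the homotopy statement as literally written.
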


In addition, $C[p, \R^n]$ is a  manifold with boundary components given by points colliding as well as points escaping to infinity. The latter faces come from the fact that the proper compactification (one that is actually compact) of $C(p, \R^n)$ is $C[\{x_1, ..., x_p\}\cup \{\infty\}, S^n]$, where $\{\infty\}$ is some fixed point on the sphere.  But since $\R^n$ is $S^n\setminus\{\infty\}$, we may as well consider $C[p, \R^n]$ while paying attention to extra faces occurring when configuration points tend to infinity.

 Codimension one faces of $C[p, \R^n]$  are given by a group of points coming together at the same time (as opposed to some colliding, then others joining them later).

\begin{defin}\label{D:LinksBundle}
Define the space $C[j_1, j_2,..., j_m, s;\, \Lk_{m}^n, \R^n]$ as the pullback 
$$
\xymatrix{
C[j_1, j_2,..., j_m, s;\, \Lk_{m}^n, \R^n]\ar[r]  \ar[d]  &
C[j_1+j_2 +\cdots+ j_m+ s,  \R^n]\ar[d]^{\pi}  \\
(C[j_1, \R]\times C[j_2, \R] \times\cdots\times C[j_m, \R])\times \Lk_{m}^{n}\ar[r]^-{ev} &
C[j_1+j_2+ \cdots+ j_m,  \R^n]
}
$$
where $\pi$ is the projection and $ev$ is the evaluation of a link with $m$ strands where the first strand is evaluated on the first $j_1$ points, second on next $j_2$ points, and so on.

Similarly define 
$C[j_1, j_2,..., j_m, s;\, \Br_{m}^n, \R^n]$ by replacing $\Lk_{m}^n$ with 
$\Br_{m}^n$ in the above diagram.
\end{defin}

Definition \ref{D:LinksBundle} 
is analogous to the one for the space of knots given on page 5283 of \cite{BT}.  Also analogous is the following result, which is a special case of Proposition A.3 in \cite{BT}. 

\begin{prop}\label{P:Bundles}
The pullbacks $C[j_1, j_2,..., j_m, s;\, \Lk_{m}^n, \R^n]$ and
$C[j_1, j_2,..., j_m, s;\, \Br_{m}^n, \R^n]$
are smooth manifolds with corners which fiber over $\Lk_{m}^{n}$
and $\Br_{m}^{n}$, respectively.
\end{prop}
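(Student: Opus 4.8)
The plan is to reduce the statement to Proposition A.3 of \cite{BT}, exactly as the text suggests, by verifying that the two hypotheses of that proposition are met in our situation: first, that the maps being pulled back are fibrations (or at least that one of them is a fiber bundle of manifolds with corners), and second, that the relevant transversality holds so that the pullback is again a manifold with corners. First I would recall the structure of the diagram in Definition~\ref{D:LinksBundle}: the right-hand vertical map $\pi\colon C[j_1+\cdots+j_m+s,\R^n]\to C[j_1+\cdots+j_m,\R^n]$ is the canonical projection forgetting the last $s$ configuration points, which by \refT{DevTheorem} and the standard theory of the Fulton--MacPherson compactification (see \cite{S:Compact}, or \cite{AS}) is a fiber bundle of manifolds with corners; its fiber is the compactified configuration space of $s$ points in $\R^n$ relative to the first $j_1+\cdots+j_m$ points. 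The bottom horizontal map $ev$ is the product of the evaluation maps $C[j_k,\R]\times\Lk_m^n\to C[j_k,\R^n]$ composed with the inclusion of the product; this is smooth because the link components are smooth embeddings and the compactified configuration spaces of points on $\R$ are themselves manifolds with corners (indeed associahedra-like polytopes).

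The key step is then to observe that pulling back the bundle $\pi$ along the smooth map $ev$ yields a smooth manifold with corners. Since $\pi$ is a fiber bundle of manifolds with corners, its pullback along any smooth map of manifolds with corners is again a fiber bundle of manifolds with corners — no transversality is needed here because $\pi$ is locally trivial. This immediately gives that $C[j_1,\ldots,j_m,s;\Lk_m^n,\R^n]$ is a smooth manifold with corners, and that the composite
\[
C[j_1,\ldots,j_m,s;\Lk_m^n,\R^n]\longrightarrow (C[j_1,\R]\times\cdots\times C[j_m,\R])\times\Lk_m^n\longrightarrow \Lk_m^n
\]
is a fiber bundle, being a composition of the pulled-back bundle with the trivial bundle projection off the first factor (the factors $C[j_k,\R]$ are compact manifolds with corners). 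The fiber over a link $L$ is the compactified configuration space $C[j_1,\ldots,j_m,s;\, L,\R^n]$ of $j_1+\cdots+j_m+s$ points in $\R^n$, the first $j_k$ of which lie on the $k$th strand of $L$.

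The braid case is handled identically: $\Br_m^n$ is by \refD{MainDef} a subspace of the same embedding space, the evaluation map restricts to it, and the pullback construction goes through verbatim, giving a fiber bundle over $\Br_m^n$. I expect the main obstacle to be purely bookkeeping rather than conceptual: one must check that the evaluation map $ev$ is genuinely a map of manifolds with corners that respects the corner stratification appropriately — i.e.\ that the corner structure on $C[j_k,\R]$ (collisions of points on a line) maps compatibly into the corner structure on $C[j_k,\R^n]$ — and that the local triviality of $\pi$ is compatible with these strata, so that the pulled-back object is a manifold \emph{with corners} and not merely a manifold with some more complicated singular boundary. All of this is exactly what is carried out in \cite[Appendix~A]{BT} for the single-strand (knot) case; the only modification, as in the proof of \refT{DiagramsAreComplexes}, is to allow the domain circle of a knot to be replaced by $m$ disjoint copies of $\R$, and correspondingly to replace $C[k,\R]$ by a product $C[j_1,\R]\times\cdots\times C[j_m,\R]$. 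Since each $C[j_k,\R]$ is itself a well-understood manifold with corners and products of manifolds with corners are manifolds with corners, no new phenomena arise, and the argument of \cite{BT} applies.
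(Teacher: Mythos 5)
Your overall plan --- reducing to \cite[Proposition~A.3]{BT} --- is the same as the paper's, which records \refP{Bundles} as a special case of that proposition, the only change being that the domain $S^1$ of a knot is replaced by $\coprod_m\R$, so that $C[q,S^1]$ becomes $C[j_1,\R]\times\cdots\times C[j_m,\R]$. The problem is that your argument does not verify the content of that proposition but replaces it with the assertion that the forgetful map $\pi\colon C[j_1+\cdots+j_m+s,\R^n]\to C[j_1+\cdots+j_m,\R^n]$ is a locally trivial fiber bundle of manifolds with corners, so that ``no transversality is needed.'' Neither \refT{DevTheorem} nor \cite{S:Compact, AS} asserts this, and it fails in the category of manifolds with corners: the fibers of $\pi$ over boundary points of the base are not isomorphic to the generic fiber. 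Already for $C[3,\R^n]\to C[2,\R^n]$, the fiber over an interior configuration $x_1\neq x_2$ is smooth with exactly two codimension-one faces (the blow-up spheres $x_3\to x_1$ and $x_3\to x_2$), whereas the fiber over a point of the face $x_1=x_2$ contains, besides the bulk chamber, the whole stratum where $x_3$ joins the infinitesimal cluster $\{x_1,x_2\}$ at a comparable scale; near the locus where these two chambers meet, the fiber is locally the union $\{t=0\}\cup\{u=0\}$ of two faces of a codimension-two corner (with $t$ the scale of the triple collision and $u$ the relative scale of the $\{x_1,x_2\}$ cluster, the base face parameter being $tu$), so it is not even a smooth manifold with corners, let alone isomorphic to the interior fiber. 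Nor can you dodge this by restricting to the interior of the base, since $ev$ hits the boundary faces of $C[j_1+\cdots+j_m,\R^n]$ whenever configuration points on the same strand collide. So ``pullback of a locally trivial bundle along a smooth map'' is precisely the gap.

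What \cite[Proposition~A.3]{BT} actually supplies --- and what must be checked rather than assumed --- is that the pullback of $\pi$ along this particular evaluation map is a smooth manifold with corners and that the composite to the space of embeddings is a locally trivial bundle; the proof there exploits the embedding condition (configuration points on the strands collide only when the corresponding domain points do, and local trivializations are built directly over the embedding space), which is also why the paper notes that the construction is unavailable for homotopy string links, where the strands are only immersed and even the extension of $ev$ to the compactifications breaks down. The correct route, and the one the paper takes, is to observe that the Bott--Taubes argument is insensitive to replacing the single domain circle by $m$ copies of $\R$ (each strand is an embedding with fixed linear behavior outside a compact set), and that it applies verbatim to the subspace $\Br_m^n$. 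A small additional slip: the factors $C[j_k,\R]$ are not compact in this model, since the strands are parametrized by all of $\R$; this is harmless but should not be used.
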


One should think of $C[j_1, j_2,..., j_m, s;\, \Lk_{m}^n, \R^n]$ (and analogously of the other space) as the space whose fiber over $LK\in \Lk_{m}^{n}$ is the space of $j_1+j_2 +\cdots+ j_m+ s$ configuration points in $\R^n$ with the first $j_1$ restricted to lie on the first strand of a given link, the second $j_2$ on the second, etc., while the last $s$ are free to move in $\R^n$.  We will denote such a fiber by $C[j_1, j_2,..., j_m, s;\, LK, \R^n]$ (or $C[j_1, j_2,..., j_m, s;\, B, \R^n]$ for $B\in \Br_{m}^n$).  The connection to the diagram complexes defined earlier should start becoming clearer; these configuration points correspond to vertices on or off the line segments.

\begin{rem}  As mentioned in the introduction, if would have been nice to define a diagram complex and configuration space integrals for the space of homotopy string links as well.  However, it is not clear how to establish \refP{Bundles} for this case.  The problem is that the homotopy link space is a subspace of the space of immersions, and Proposition A.3 in \cite{BT} requires points in the link space to be embeddings.
\end{rem}


\subsection{Constructing cohomology classes of link spaces}\label{S:Forms}


We now want to construct morphisms of chain complexes $\LD^{o,*}\to \Omega^{(n-3)o+*}(\Lk_{m}^n)$
and $\BD^{o,*}\to \Omega^{(n-3)o+*}(\Br_{m}^n)$, where $\Omega$ is the usual deRham algebra of differential forms.   The morphisms will be given by a modification of the Bott-Taubes configuration space integration \cite{BT}.  We will describe the construction for $\LD^{o,d}$ and then mention how it changes for the other space.

Let $n\geq 3$.  Given $\Gamma\in\LD^{o,d}$  with $j_i$ external vertices on the $i$th strand for $1\leq i\leq m$, $s$ internal vertices, and $|e|$ non-loop edges, consider the map
$$
\phi_{\Gamma}\colon C[j_1, j_2,..., j_m, s;\, \Lk_{m}^n, \R^n]
\longrightarrow S^{|e|(n-1)}
$$
given by the product of the normalized difference of those pairs of points in $C[j_1, j_2,..., j_m, s;\, \Lk_{m}^n, \R^n]$ for which there exists an edge in $\Gamma$.  (We first have to label the last $s$ points in $C[j_1, j_2,..., j_m, s;\, \Lk_{m}^n, \R^n]$ and corresponding internal vertices in $\Gamma$ the same way; this is already done for the external vertices by construction since those are labeled in a linear order as are the configuration points on the link strands.)

For each loop on $\Gamma$, instead of the normalized difference, we use the normalized derivative of the strand that has the loop.  We denote the product of all such maps by 
$$
(\partial\Lk_{m}^n)_{\Gamma} \colon C[j_1, j_2,..., j_m, s;\, \Lk_{m}^n, \R^n]
\longrightarrow S^{(\text{\# of loops in $\Gamma$})(n-1)}.
$$

Now let $sym_{S^{n-1}}$ be a normalized top form on $S^{n-1}$, by which we mean that its pullback via the antipodal map preserves it, up to sign.  We will further require $sym_{S^{n-1}}$ to be supported in $U_N$ and $U_S$ (see (2) of \refD{MainDef}).   Letting $\omega$ be the product of $|e|+(\text{\# of loops in $\Gamma$})$ such top forms on  $S^{n-1}$, define the pullback form
$\alpha$ on $C[j_1, j_2,..., j_m, s;\, \Lk_{m}^n, \R^n]$ by
$$
\alpha=(\phi_{\Gamma}\times (\partial\Lk_{m}^n)_{\Gamma})^*\omega.
$$
  Finally, $\alpha$ can be pushed forward along the fiber of the bundle map 
$$
\pi_{\Lk}\colon C[j_1, j_2,..., j_m, s;\, \Lk_{m}^n, \R^n]\longrightarrow \Lk_{m}^{n}
$$

We will denote the resulting form by $(I_{\Lk})_\Gamma$.  Its degree is  
\begin{align*}
\degree (I_{\Lk})_\Gamma & =(\text{degree of $\alpha$})-(\text{dimension of fiber of $\pi_{\Lk}$}) \\
  & =(n-1)|e|-(ns+j_1+j_2+\cdots+j_m)  \\
  & =(n-1)|e|-n|v_{int}|-|v_{ext}|  \\
  & = (n-3)o+d
\end{align*}
where as usual $o=\ord\Gamma$ and $d=\degree\Gamma$.

The value of $(I_{\Lk})_\Gamma$ on an $((n-3)o+d)$-chain sitting over $LK\in \Lk_{m}^n$ is therefore
$$
(I_{\Lk})_\Gamma(LK)=\int\limits_{\pi_{\Lk}^{-1}(LK)=C[j_1, j_2,..., j_m, s;\, LK, \R^n]}\alpha.
$$

We thus get a linear map
\begin{align}
I_{\Lk}\colon \LD^{o,d} & \longrightarrow \Omega^{(n-3)o+d}(\Lk_{m}^{n}) \label{E:LinksMap}\\
\Gamma  & \longmapsto  \Big(LK\mapsto (I_{\Lk})_{\Gamma}(LK)=\int\limits_{C[j_1, j_2,..., j_m, s;\, LK, \R^n]}\alpha\Big).\notag
\end{align}

The corresponding construction for 
$\Gamma\in\BD^{o,d}$ is identical, except now the total space is 
$C[j_1, j_2,..., j_m, s;\, \Br_{m}^n, \R^n]$.  However, since  $\BD^{o,d}$ has no loops, $\alpha$ is defined just as the pullback $\phi_{\Gamma}^*\omega$.  The map corresponding to $I_{\Lk}$  will be denoted by 
$I_{\Br}$.

\begin{prop}\label{P:MapsRespectRelations}
The maps
\begin{align*}
I_{\Lk}\colon\LD^{o,d} & \longrightarrow \Omega^{(n-3)o+d}(\Lk_{m}^{n}) \\
I_{\Br}\colon\BD^{o,d} & \longrightarrow \Omega^{(n-3)o+d}(\Br_{m}^{n}) 
\end{align*} 
are compatible with the relations from \refD{DiagramSpaces}. 
\end{prop}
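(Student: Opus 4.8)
The plan is to check that the four defining relations in \refD{DiagramSpaces} are each sent to zero (or the appropriate signed identification) under $I_{\Lk}$ and $I_{\Br}$, dealing with them one at a time. The underlying principle throughout is that the configuration space integral $(I_{\Lk})_\Gamma$ depends on $\Gamma$ only through the data of which pairs of vertices are joined by an edge, together with the choice of volume form $sym_{S^{n-1}}$ on each factor $S^{n-1}$; any symmetry of this data that the relations encode must be matched by a symmetry of the integrand, up to the sign predicted by orientation bookkeeping on the fiber and on the product of spheres.

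First I would dispose of relation (1): if $\Gamma$ has two edges $(a,b)$ and $(a,b)$ between the same pair of vertices, then the map $\phi_\Gamma$ factors through the diagonal $S^{n-1}\hookrightarrow S^{n-1}\times S^{n-1}$ on those two factors (the two normalized differences $x_b - x_a$ agree, or agree up to the fixed antipodal sign coming from the orientations), so the pulled-back form $\alpha$ is a pullback of $sym_{S^{n-1}}\wedge sym_{S^{n-1}}$ along a map to a space of dimension $n-1 < 2(n-1)$; hence $\alpha = 0$ identically, and so is its pushforward. Relation (4) for braids is the observation that with $sym_{S^{n-1}}$ supported in $U_N \cup U_S$, a loop edge pulls back via the normalized derivative $\partial\Br$, which by condition (2) of \refD{MainDef} misses $U_N$ and $U_S$; hence the pullback of $sym_{S^{n-1}}$ along $(\partial\Br_m^n)_\Gamma$ is zero on that factor, killing $\alpha$. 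For a chord joining two external vertices on a single strand, I would argue that along the fiber the relevant direction vector is constrained to lie in the (closed) hemisphere of directions realizable as normalized differences of two ordered points on that strand — which, because of the braid condition (1), again misses a neighborhood of the poles where the form is supported — so once more the pullback vanishes. (Here one should be slightly careful at the compactified boundary, where colliding points contribute tangent directions; these are exactly the derivative directions, again avoiding the support.)

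The substantive cases are relations (2) and (3), which are pure sign computations. For relation (2), $n$ odd: if $\Gamma'$ is obtained from $\Gamma$ by permuting the labels of the $s$ internal vertices by $\sigma$ and reversing the orientations of some set $E$ of edges, then the two maps $\phi_{\Gamma'}$ and $\phi_\Gamma$ differ by (i) precomposition with the coordinate permutation of $C[\cdots;\Lk_m^n,\R^n]$ that permutes the last $s$ configuration points by $\sigma$, and (ii) postcomposition with the product over $E$ of the antipodal maps on the corresponding $S^{n-1}$ factors. Since $n-1$ is even, each antipodal map on $S^{n-1}$ is orientation-reversing and $sym_{S^{n-1}}$ pulls back to $-sym_{S^{n-1}}$, contributing $(-1)^{|E|}$. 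The coordinate permutation of the total space is a diffeomorphism covering the identity on $\Lk_m^n$; its effect on the fiber orientation (and hence on the pushforward) is the sign of $\sigma$ acting on the $ns$ fiber coordinates coming from the internal vertices, which is $(\operatorname{sgn}\sigma)^n = \operatorname{sgn}\sigma$ again because $n$ is odd — so this contributes $(-1)^{\ord\sigma}$. Multiplying, $(I_{\Lk})_{\Gamma'} = (-1)^{\ord\sigma + |E|}(I_{\Lk})_\Gamma$, which is exactly the claimed relation. For relation (3), $n$ even: permuting edge labels by $\tau$ permutes the factors of the product form $\omega = sym_{S^{n-1}}\wedge\cdots\wedge sym_{S^{n-1}}$, and since each $sym_{S^{n-1}}$ has odd degree $n-1$, transposing two of them costs a sign, giving $(I_{\Lk})_{\Gamma'} = (-1)^{\ord\tau}(I_{\Lk})_\Gamma$; note that in the even case edges are labeled rather than oriented, so there is no antipodal contribution, consistent with the relation as stated. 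The braid case $I_{\Br}$ is identical; one only notes that the loop factors and the map $(\partial\Lk_m^n)_\Gamma$ are absent, which simplifies but does not alter the argument.

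I expect the main obstacle to be the orientation bookkeeping in relation (2): one has to fix, once and for all, a convention for orienting the fibers of $\pi_{\Lk}$ (ordering the configuration points, and within each point the $\R^n$-coordinates) and for orienting the product of spheres (the ordering of the edges), and then verify that the permutation-of-internal-vertices diffeomorphism acts on the fiber orientation by precisely $\operatorname{sgn}\sigma$ with no stray sign from the way external-vertex coordinates interleave the internal ones — this is where the partition of external vertices into $m$ strands (as opposed to one circle in \cite{CCRL}) needs a remark, though it does not change the count since the diffeomorphism fixes the external coordinates pointwise. The vanishing arguments for (1) and (4) are essentially formal once the dimension count and the support condition on $sym_{S^{n-1}}$ are in place; the only care needed there is the behavior at the codimension-one faces of the compactification, which I would handle by noting that the relevant direction maps extend continuously to those faces and their images still avoid the support of $sym_{S^{n-1}}$.
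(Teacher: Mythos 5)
Your treatment of relations (1)--(3) is essentially the paper's proof, with the sign bookkeeping spelled out: the double-edge case is killed by the same dimension count (the pullback of $sym_{S^{n-1}}\wedge sym_{S^{n-1}}$ through the diagonal, or antidiagonal, of the two sphere factors vanishes), permuting internal vertices acts on the fiber orientation by $(\operatorname{sgn}\sigma)^n=\operatorname{sgn}\sigma$ for $n$ odd, reversing edge orientations composes with antipodal maps costing $(-1)$ per edge when $n-1$ is even, and permuting edge labels permutes odd-degree wedge factors when $n$ is even. That is exactly what the paper asserts (more tersely), and your remark that the partition of external vertices into $m$ strands does not disturb the count is the one adjustment the paper makes relative to \cite{CCRL}.

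The gap is in your argument for relation (4), in the case of a chord joining two external vertices on a single braid strand. You claim the secant direction lies in a closed hemisphere which, ``because of the braid condition (1),'' misses a neighborhood of the poles. This cannot be right as stated: the two poles are antipodal, so at least one of them lies in any closed hemisphere, and condition (1) of \refD{MainDef} only tells you that the secant $(x_1-x_2)/|x_1-x_2|$ has positive first component --- it gives no separation from $U_N\cup U_S$ at all. The vanishing here is not a consequence of the one-direction condition but of the flattening condition: the paper (in the proof of \refT{MainThm}, where relation (4) is actually dealt with --- its proof of this proposition only treats (1)--(3)) argues that condition (2) forces the secant direction between two points on the same strand to stay outside $U_N\cup U_S$, the same neighborhoods in which $sym_{S^{n-1}}$ is supported, and that is what kills the pullback on the chord factor. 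Note also that this is genuinely a statement about secants, not tangents, and it does not follow formally from the tangent condition alone (normalized sums of vectors each avoiding two antipodal caps can enter the caps), so simply replacing your appeal to condition (1) by an appeal to condition (2) without addressing the secant geometry would still leave the step unjustified. Your loop-edge argument under (4) is fine and matches the paper's use of the support of $sym_{S^{n-1}}$ against condition (2).
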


\begin{proof}
To simplify notation, denote both maps by $I$.

For the first relation, if a diagram has a double edge, then two of the maps to the product of spheres are the same, and $I$ thus factors through $S^{(n-1)|e|(\text{\# of loops in $\Gamma$})-1}$.  For dimensional reasons, the pullback of $\omega$ thus must be 0 (see, for example,  \cite[Proposition 5.24]{HLTV} for details).

It is also clear that $I$ respects the orientation relations (2) and (3) since 
permuting the labels of the configuration points off the link (these correspond to internal vertices) may change the orientation of the fibers $\pi_{\Lk}^{-1}(LK)$ for $n$ odd;
changing edge orientations composes $\phi_{\Gamma}$ with some number of antipodal maps which may introduce a sign for $n$ odd;
and permuting the edge labels permutes the various wedge factors $sym_{S^{n-1}}$ in $\omega$ and may introduce a sign for $n$ even.
All the signs in the diagram complexes have been defined to be compatible with the corresponding sign changes in $I$.
\end{proof}

%

%

The following is the main theorem of the paper.

\begin{thm}\label{T:MainThm}  Let $m\geq 1$. For $n> 3$, the maps $I_{\Lk}$ and $I_{\Br}$
are morphisms of cochain complexes.
\end{thm}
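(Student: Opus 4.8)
**The plan is to follow the Bott--Taubes / Cattaneo--Cotta-Ramusino--Longoni strategy: apply the generalized Stokes' theorem for fiber integration and show that, for $n>3$, all but the ``contraction'' faces contribute zero.** Concretely, for a diagram $\Gamma$ with associated form $\alpha$ on the bundle $C[j_1,\dots,j_m,s;\,\Lk_m^n,\R^n]\to\Lk_m^n$, the identity $d\,(I_{\Lk})_\Gamma = \int_{\text{fiber}} d\alpha \pm \int_{\partial\text{-fiber}}\alpha$ holds, where $d\alpha = 0$ since $\alpha$ is a pullback of a product of volume forms on spheres (a closed form). So $d\,(I_{\Lk})_\Gamma = \pm\sum_{F}\int_F \alpha$, the sum running over the codimension-one faces $F$ of the fiber $C[j_1,\dots,j_m,s;\,LK,\R^n]$. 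The goal is to match $\sum_F \int_F\alpha$ with $I_{\Lk}(\delta\Gamma)$, i.e.\ to show that the only faces surviving are those where exactly two adjacent vertices collide, that each such face contributes $(I_{\Lk})_{\Gamma/e}$ with the correct sign $\epsilon(e)$, and that all other faces integrate to zero.

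First I would recall (from \refT{DevTheorem} and the discussion following it) the classification of codimension-one faces: a face corresponds to a subset $A$ of the $|v_{ext}|+s$ vertices (of size $\ge 2$) colliding together, plus the faces where a subset escapes to infinity. I would then split into cases. \textbf{(i) Principal faces:} $A=\{a,b\}$ consists of exactly two vertices connected by an edge of $\Gamma$. If $a,b$ are consecutive vertices joined by an internal edge, a mixed edge, or an arc, the face contributes $(I_{\Lk})_{\Gamma/e}$; tracking the induced orientation on the blown-up face and the antipodal/relabeling bookkeeping gives exactly the sign $\epsilon(e)$ prescribed in \refE{OddSign}/\refE{EvenSign}. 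This is precisely where the definition of $\delta$ was engineered to match. \textbf{(ii) Hidden faces with $|A|\ge 3$ or with $|A|=2$ but $a,b$ not joined by an edge:} here one uses the standard dimension-counting/vanishing arguments. When $A$ contains only internal vertices, one invokes the Kontsevich-type vanishing lemma (the relevant map to configuration-space-of-$S^{n-1}$ factors through a space of too-small dimension, or an involution argument kills the integral); when $A$ meets the strands, one uses that the restriction of $\phi_\Gamma$ to the face has image in a sphere bundle whose dimension is smaller than the dimension of the face, so $\alpha|_F=0$ for degree reasons --- this is exactly where $n>3$ is used, since for $n=3$ some of these faces (the ``anomalous'' ones) fail to vanish. \textbf{(iii) Faces at infinity:} a subset of points escaping to $\infty$; these vanish because the differential-form degree again exceeds the fiber dimension, or because the corresponding directions are constrained (and one must use that the link agrees with a fixed linear embedding outside a compact set, so the ``ends'' of the strands do not create extra faces).

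Finally I would assemble: $d\circ I_{\Lk} = \sum_{\text{principal }F}\int_F\alpha = \sum_{e}(-1)^{\epsilon(e)}(I_{\Lk})_{\Gamma/e} = I_{\Lk}(\delta\Gamma)$, and identically for $I_{\Br}$ --- in the braid case there are no loops and relation (4) removes chords on a single segment, so I would also check that the faces one would expect to be problematic are exactly the ones excluded by the definition of $\BD$. Most of this is ``precisely the content of'' \cite{CCRL} with the single-segment case replaced by $m$ segments, so I would organize the write-up as: state Stokes, note $d\alpha=0$, enumerate face types, cite \cite{CCRL, BT, HLTV} for the vanishing of hidden and anomalous faces and for the principal-face sign computation, and observe that the only new feature is the partition of external vertices among $m$ strands, which affects neither the vanishing arguments nor the sign conventions.

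\textbf{The main obstacle} I anticipate is the vanishing of the hidden faces in which a collapsing subset $A$ involves external vertices lying on \emph{several different} strands simultaneously --- a configuration that has no analogue in the one-segment knot case. One must check that the relevant map to a configuration space of directions in $S^{n-1}$ still has image of dimension strictly less than the dimension of the face (so $\alpha$ pulls back to zero), and that no new low-codimension degeneracy is introduced by the strands being distinct embedded lines rather than one. I expect this to go through by the same dimension count as in \cite{BT, CCRL} --- collapsing $|A|$ points imposes the same number of constraints regardless of how they are distributed on the strands --- but it is the point where one genuinely has to verify that the link generalization behaves like the knot case, rather than merely citing it.
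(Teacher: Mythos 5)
Your overall strategy is the paper's: Stokes' theorem for the pushforward, closedness of $\alpha$, the division of the codimension-one boundary into principal faces (matching $\delta$ with the signs $\epsilon(e)$), hidden faces killed by the arguments of \cite{CCRL} (where $n>3$ enters), and faces at infinity killed using the prescribed linear behavior of the strands. One remark on your anticipated obstacle: hidden faces in which external vertices on several \emph{different} strands collide do not in fact occur, because distinct strands of a fixed embedded link are disjoint, so configuration points constrained to different strands cannot collide at finite distance. The genuinely new degeneration is the face at infinity involving points on several strands, and that is handled exactly as you indicate for faces at infinity: outside a compact set the strands are parallel lines, so the restriction of the direction maps to such a face factors through a point and the pullback vanishes.

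There is, however, a concrete gap in the braid half. You propose to ``check that the faces one would expect to be problematic are exactly the ones excluded by the definition of $\BD$,'' but exclusion by relation (4) is the problem, not the solution: the differential of a legitimate braid diagram can produce (e.g.\ by contracting a mixed edge whose internal endpoint is also joined to another external vertex on the same segment) a diagram with a chord on a single segment, which is \emph{set to zero} in $\BD$. For $d\,I_{\Br}(\Gamma)=I_{\Br}(\delta\Gamma)$ one must therefore prove that the corresponding principal face contribution on the braid side vanishes, and your proposal supplies no mechanism for this; a dimension count does not do it. The paper's mechanism is the specific choice of the form $sym_{S^{n-1}}$ with support in the polar neighborhoods $U_N\cup U_S$, combined with condition (2) of \refD{MainDef}: since the normalized tangent vectors of a braid strand avoid $U_N$ and $U_S$, so does the normalized difference of two points on a single strand, hence the factor of $\omega$ attached to a single-segment chord pulls back to zero and the relevant integrals vanish identically. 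Without this (or some substitute argument), the statement that $I_{\Br}$ is a cochain map is unproven, even granting everything you cite from \cite{CCRL, BT}.
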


\begin{rem}  This is a generalization of the same statement for knots, namely Theorem 1.1 in \cite{CCRL}.
\end{rem}

\begin{proof}

Consider first the map $I_{\Lk}$.  
The differential in $\Omega^{(n-3)o+d}(\Lk_{m}^{n})$ is given by Stokes' Theorem which says that 
$$
d(I_{\Lk})_\Gamma=\pi_*d\alpha-\int\limits_{\partial C[j_1, j_2,..., j_m, s;\, LK, \R^n]}\alpha.
$$
The first term is zero since $\alpha$ is closed.  The second is the sum of the pushforwards of $\alpha$ along all the codimension one boundaries of $C[j_1, j_2,..., j_m, s;\, LK, \R^n]$ (to which $\alpha$ extends smoothly; see \cite{BT, V:SBT}).

We identify three types of codimension one boundary:
\begin{itemize}
\item  \emph{principal faces}, characterized by two points coming together;
\item  \emph{hidden faces}, characterized by more than two points coming together;
\item  \emph{faces at infinity}, characterized by one or more points tending to infinity.
\end{itemize}
The differential in $\LD^{o,d}$ has been defined so as to correspond precisely to certain principal faces.  Namely, for  $\Gamma\in \LD^{o,d}$, $(I_{\Lk})_{d\Gamma}$ is the form obtained by adding the integrals of $\alpha$ over all principal faces of $C[j_1, j_2,..., j_m, s;\, \Lk_{m}^n, \R^n]$.  We just need to show that the integral along those principal faces which do not share a mixed or internal edge is zero.  This is accomplished by noting that, if $x_1$ and $x_2$ collide and there is no map keeping track of the direction between them, then the restriction of the map $\phi_{\Gamma}\times (\partial\Lk_{m}^n)_{\Gamma}$ to that principal face, which we will denote by $\partial_{x_1=x_2}C[j_1, j_2,..., j_m, s;\, LK, \R^n]$, factors through a space in which $x_1$ and $x_2$ are allowed to pass through each other.  In other words, let $C'[j_1, j_2,..., j_m, s;\, \Lk_{m}^n, \R^n]$ be the space where the $x_1=x_2$ diagonal has not been blown up (or map $\gamma$ has been modified appropriately).  Then we have a factorization
$$
\xymatrix{
\partial_{x_1=x_2}C[j_1, j_2,..., j_m, s;\, LK, \R^n] \ar[rr]^-{(\phi_{\Gamma}\times (\partial\Lk_{m}^n)_{\Gamma})|_{x_1=x_2}}\ar[dr] & & S^{(|e|+(\text{\# of loops in $\Gamma$}))(n-1)}  \\
&  \partial_{x_1=x_2}C'[j_1, j_2,..., j_m, s;\, LK, \R^n]  \ar[ur] &
}
$$
The dimension of $\partial_{x_1=x_2}C'[j_1, j_2,..., j_m, s;\, LK, \R^n]$ is strictly less than that of $\partial_{x_1=x_2}C[j_1, j_2,..., j_m, s;\, LK, \R^n]$ since $n> 3$ ($x_1$ and $x_2$ coming together in the first space is a boundary of codimension at least 3).  Thus $\alpha$ must be zero.

Note that this argument works any time the subset of colliding vertices can be broken up into two connected subsets.  Details in the case of knots and $n=3$ can be found in \cite[Proposition 4.1]{V:SBT}, but the statement generalizes easily to our case.

To prove the theorem, it then remains to show that the integrals along hidden faces and  faces at infinity vanish.

\vskip 4pt
\emph{Vanishing along hidden faces:}  The proof for the case of knots \cite[Theorem A.6]{CCRL}  applies in an identical way here.  It is important to note that this is where the assumption $n>3$ is needed (rest of the arguments work for $n=3$ as well).
\vskip 4pt
\emph{Vanishing along faces at infinity:}  This is essentially the content of \cite[Lemma 3.9]{V:IT}, which is the same statement for knots.  The only new kind of a face at infinity is the one where two external vertices on different strands escape to infinity.  But then the vector between them is constant in the limit (since our links are linear outside a compact set) and so the restriction of $\phi_{\Gamma}\times (\partial\Lk_{m}^n)_{\Gamma}$ to this face factors through a point.  The pullback $\alpha$ again must be zero.

The arguments for $I_{\Br}$ are identical, except the last relation from \refD{DiagramSpaces} has to be taken into account.  But this is immediate since the condition (2) of \refD{MainDef} means that, if $x_1$ and $x_2$ are points on the same braid strand, then the vector $(x_1-x_2)/|x_1-x_2|$ is never in $U_N$ or $U_S$, and therefore the integral of the pullback of the normalized top form concentrated in those neighborhoods via this map must be zero.  One has such an integral precisely when there is a chord on a single line segment in a diagram in $\BD^{o,d}$, in which case the diagram is set to zero.
\end{proof}

\begin{rem}
Changing the form $sym_{S^{n-1}}$ to another symmetric form with support in $U_N$ and $U_S$ does not change the resulting cohomology class as the difference of integrals along the two forms for any diagram $\Gamma$ is exact \cite[Proposition 4.5]{CCRL}.
\end{rem}

\vskip 6pt

It should be clear that the maps $I_{\Lk}$ and $I_{\Lk}$ are compatible with the usual inclusions of long knots and braids into links.  Namely, with $\K^n$ as before denoting the space of long knots in $\R^n$, one has a map
$$
\K^n\longrightarrow \Lk_m^n
$$
given by replacing the first strand of the long unlink by a long knot (scaled sufficiently so that it is away from the rest of the strands), and the induced quotient map on deRham algebras 
$$
\Omega^*(\Lk_m^n)\longrightarrow \Omega^*(\K_n).
$$
Associated to $\K^n$, one also has the diagram complex $\mathcal{D}$ of Cattaneo, Cotta-Ramusino, and Longoni \cite{CCRL} mentioned in the introduction, which is defined exactly as $\LD$ except that it consists of diagrams with only one line segment which contains all external vertices.  There is thus an inclusion 
$$
\mathcal{D}\longrightarrow \LD
$$
given by replacing the first segment of the empty diagram by a given diagram in $\mathcal{D}$.  Further, the integration map in \cite{CCRL} 
$$
I_{\K}\colon \mathcal{D} \longrightarrow \Omega^*(\K_n).
$$
is defined exactly as ours.  Putting this together, we get that the composed map
$$
\mathcal{D}\longrightarrow \LD\longrightarrow \Omega^*(\Lk_m^n)  \longrightarrow \Omega^*(\K_n)
$$   
is precisely $I_{\K}$.

The situation is exactly the same for the inclusion of braids into links, i.e.~the classes defined by $I_{\Lk}$ and $I_{\Br}$ map to each other in a natural way.


\section{The case $n=3$}\label{S:Casen=3}


The dimension $n=3$ is of course in many respects the most interesting one, and we wish to pay attention to what classes in degree zero, or link invariants, our configuration space integrals might produce.  As it turns out, they give all finite type, or Vassiliev, invariants.  This is the subject of section \ref{S:UniversalFiniteType} with some preliminary results about the diagram complexes and certain faces of compactified configuration spaces associated to this case established in sections \ref{S:TrivalentDiagrams} and \ref{S:AnomalousFaces}.


\subsection{Trivalent diagrams}\label{S:TrivalentDiagrams}



To see what link invariants, i.e.~elements of $H^0(\Lk_{m}^{3})$  and $H^0(\Br_{m}^{3})$, our integrals generate, first note that if $n=3$, then to obtain classes in $H^0$, we must set $d=0$.  It is a simple combinatorial exercise to see that this forces diagrams in both of our diagram complexes to have trivalent internal and univalent external vertices (where we do not count arcs emanating from external vertices into their valence, only edges).  Note also that such a graph cannot have loops.  Its order is then half the total number of vertices. 
Such diagrams are known as \emph{trivalent diagrams}.  Before we state the main results about them which are of interest here, it is convenient to 
identify $H^0(\mathcal{LD}^{o,0})$ and $H^0(\mathcal{BD}^{o,0})$ with their duals (this is fine since we have a basis, namely the diagrams), in order to connect to the theory of finite type invariants in section \ref{S:UniversalFiniteType}.  We then have

\begin{thm}\label{T:TrivalentCohomology}
The cohomology group $H^0(\mathcal{LD}^{o,0})$ is isomorphic to the dual of the subspace of trivalent diagrams $\mathcal{LD}^{o,0}$ in $\mathcal{LD}^{o,d}$ modulo \emph{STU} and \emph{1T} relations pictured in Figures \ref{Fig:STURelation} and \ref{Fig:1TRelation}.

The result is the same for  $H^0(\mathcal{BD}^{o,0})$ except the 1T relation is not needed.
\end{thm}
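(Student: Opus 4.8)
The plan is to analyze the degree-zero part of the cochain complex $(\mathcal{LD}^{o,*},\delta)$ explicitly and identify $H^0$ with the cokernel of $\delta\colon \mathcal{LD}^{o,-1}\to\mathcal{LD}^{o,0}$. First I would record the combinatorial consequences of $d=0$: as noted just before the statement, a diagram of order $o$ and degree $0$ must have all internal vertices trivalent and all external vertices univalent (counting only edges, not arcs), so $\mathcal{LD}^{o,0}$ is spanned by \emph{trivalent diagrams}. Next I would examine degree $d=-1$. A diagram of degree $-1$ can arise in essentially one way: it has exactly one internal vertex of valence $4$ (or, relatedly, an external vertex carrying two edge-endpoints), with all other internal vertices trivalent. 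The differential $\delta$ contracts internal edges, mixed edges, and arcs; applying $\delta$ to such a degree-$(-1)$ diagram and collecting terms produces precisely the familiar local relations. Contracting the edges incident to the 4-valent internal vertex yields, after the sign bookkeeping from \refE{OddSign} and \refE{EvenSign}, the \emph{STU} relation of Figure~\ref{Fig:STURelation}; the term in which a chord-like configuration collapses onto a single strand, together with relation (1) (double edges vanish), produces the \emph{1T} relation of Figure~\ref{Fig:1TRelation}. Contractions of internal edges among the trivalent part give the IHX-type relations, but one checks these are already consequences of STU in the presence of the line segments, exactly as in the classical Bar-Natan/CCRL setup.

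Carrying this out: I would first reduce to showing that the image of $\delta\colon\mathcal{LD}^{o,-1}\to\mathcal{LD}^{o,0}$ is exactly the subspace of $\mathcal{LD}^{o,0}$ generated by STU and 1T. The inclusion ``$\subseteq$'' is the sign computation sketched above — one enumerates the edge- and arc-contractions available at the unique higher-valence vertex of a degree-$(-1)$ generator and matches them termwise against the pictures. The inclusion ``$\supseteq$'' is easier: each STU or 1T relator is manifestly $\delta$ of a specific degree-$(-1)$ diagram (the one with the relevant 4-valent vertex, resp.\ the one with a doubled configuration), so it lies in the image. Having identified the image, $H^0(\mathcal{LD}^{o,0})=\mathcal{LD}^{o,0}/\mathrm{im}\,\delta$ is the space of trivalent diagrams modulo STU and 1T; dualizing (legitimate since the diagrams form a basis, as the statement notes) gives the asserted isomorphism. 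For $\mathcal{BD}^{o,0}$ the argument is identical except that relation (4) has already killed every chord on a single segment and every loop, so the degree-$(-1)$ generators whose contraction would have produced the 1T relator are zero to begin with — hence the 1T relation is vacuous and does not need to be imposed.

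The main obstacle I expect is the sign bookkeeping in the ``$\subseteq$'' direction: one must verify that the various contractions emanating from a single degree-$(-1)$ diagram, with the signs $\epsilon(e)$ dictated by \refE{OddSign} and \refE{EvenSign} (and the orientation/labeling conventions of \refD{DiagramSpaces}), assemble into exactly the STU and 1T relators with consistent signs across all the parity cases ($n$ even vs.\ odd), rather than into those relators up to some sign ambiguity that would have to be absorbed elsewhere. This is precisely the kind of check that, for the single-segment case, is carried out in \cite{CCRL} (their treatment of $H^0$ of the knot diagram complex and its relation to trivalent diagrams with STU); I would invoke that computation and indicate only the modification needed here, namely that the external vertices are now partitioned among $m$ segments, which affects the relative order of external-vertex labels appearing in \refE{OddSign} but not the structure of the resulting relations. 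A secondary point to address carefully is why the ``IHX'' contractions among purely internal trivalent vertices contribute nothing new to $H^0$ beyond STU — again this is standard (IHX follows from STU once one end of the relevant edge can be slid to a segment), and I would state it as a lemma with a one-line reduction rather than reprove it.
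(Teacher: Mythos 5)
There is a genuine gap, and it comes from inverted degree bookkeeping. In this paper the differential $\delta$ \emph{contracts} edges and arcs and therefore \emph{raises} the degree $d=2|e|-3|v_{int}|-|v_{ext}|$ by one (contracting an internal or mixed edge removes one edge and one internal vertex; contracting an arc removes one external vertex). Consequently a diagram with a single $4$-valent internal vertex, or with a bivalent external vertex, or with a loop at an external vertex, has degree $+1$, not $-1$, and its image under $\delta$ lies in degree $+2$. So your central claims fail: a degree-$(-1)$ generator is not ``a diagram with one $4$-valent internal vertex,'' the STU and 1T combinations are not of the form $\delta(\text{degree-}(-1)\text{ diagram})$, and $H^0(\mathcal{LD}^{o,0})$ is not the cokernel of $\delta\colon\mathcal{LD}^{o,-1}\to\mathcal{LD}^{o,0}$. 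The cocycle condition you discarded is in fact the whole content of the statement: in degree zero the relevant object is (essentially) $\ker\bigl(\delta\colon\mathcal{LD}^{o,0}\to\mathcal{LD}^{o,1}\bigr)$, a \emph{subspace} of the span of trivalent diagrams, which is exactly why the theorem asserts an isomorphism with the \emph{dual} of trivalent diagrams modulo STU and 1T rather than with that quotient itself; your route only appears to land on the statement because you invoke the basis self-identification at the very end, which blurs the subspace/quotient distinction.

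The paper's proof runs through the adjoint of $\delta$: identifying each graded piece with its dual via the diagram basis, an element of $H^0$ is a functional on trivalent diagrams that annihilates the image of the transpose of $\delta\colon\mathcal{LD}^{o,0}\to\mathcal{LD}^{o,1}$, i.e.\ the ``blow-ups'' of the degree-$(+1)$ diagrams. Splitting a bivalent external vertex in all possible ways produces the STU relator of Figure \ref{Fig:STURelation}, splitting an external vertex carrying a loop produces the 1T relator of Figure \ref{Fig:1TRelation}, and splitting a $4$-valent internal vertex produces IHX, which is redundant given STU (Bar-Natan). Note that your phrase ``contracting the edges incident to the $4$-valent internal vertex yields STU'' conflates contraction with splitting and also attaches the wrong relation to that vertex: the $4$-valent internal vertex corresponds to IHX, while STU comes from the bivalent external vertex. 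Your remark about the braid case (relation (4) already kills loops and single-strand chords, so 1T is vacuous) is correct in spirit, but it too must be routed through this adjoint mechanism — the degree-$(+1)$ loop diagrams whose blow-up would impose 1T are absent from $\mathcal{BD}$ — not through nonexistent degree-$(-1)$ generators.
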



\begin{figure}[h]
\input{sturelation.pstex_t}
\caption{$STU$ relation.  No other edges connect to the pictured vertices and the diagrams are same outside pictured portions.}
\label{Fig:STURelation}
\end{figure}


\begin{figure}[h]
\input{1trelation.pstex_t}
\caption{$1T$ relation.   The rest of the diagram is entirely outside the pictured portion.}
\label{Fig:1TRelation}
\end{figure}

\begin{rem}
For the braid case, 1T relation is unnecessary since those diagrams do not have chords with endpoints on the same segment.
\end{rem}

\begin{proof} The proof is immediate from considering what the adjoint to the differential $\delta$ must be.  Since $\delta$ identifies vertices, its adjoint ``blows them up" in all possible ways.  Thus blowing up an external vertex with two edges emanating from it gives the STU relation and blowing up a vertex with a loop gives the 1T relation.  It turns out one need not consider the blowups of  internal vertices, since the relation in that case (so-called \emph{IHX} relation) follows from the STU relation \cite[Theorem 6]{BN:Vass}.  More details in the case of knots can be found in \cite[Section 3]{Long:Classes}.
\end{proof}
%


\subsection{Anomalous faces}\label{S:AnomalousFaces}


Using maps $I_{\Lk}$ and $I_{\Br}$, we want to construct maps
\begin{align*}
I_{\Lk}^*\colon & H^0(\LD^{k,0})\longrightarrow H^0(\Lk_{m}^{3}) \\
I_{\Br}^*\colon & H^0(\BD^{k,0})\longrightarrow H^0(\Br_{m}^{3}).
\end{align*}

However, the proof that the integrals along hidden faces vanish for trivalent diagrams and $n=3$ breaks down for the special face when all the points in a single (or more) \emph{isolated strand component} of $C[j_1, j_2,..., j_m, s;\, LK, \R^n]$ or $C[j_1, j_2,..., j_m, s;\, B, \R^n]$ come together.  Such a face is called \emph{anomalous}.  By such a component we mean a subset of vertices corresponding to a connected component of a diagram $\Gamma$ all of whose external vertices are on a single strand, and the arcs between those external vertices do not contain external vertices of any other components.
In other words, an anomalous face has a subset of the configuration points colliding, but there is no map measuring the direction between a point in this subset and a point outside the subset.  It should be clear why external vertices are required to lie on a single segment; configuration points on link components can only collide if they lie on the same strand.

Notice that if $\Gamma$ has such a component $\Gamma_{str}$, then by Fubini's Theorem we have
$$
I_{\Gamma}=I_{\Gamma_{str}}I_{\Gamma\setminus\Gamma_{str}},
$$
where $\Gamma\setminus\Gamma_{str}$ denotes $\Gamma$  with all the vertices and edges in $\Gamma_{str}$ removed.  We can thus without loss of generality assume from now on that a diagram giving rise to an anomalous face has one connected component all of whose external vertices are on a single line segment.  This is why anomalous faces can be regarded as a knotting, rather than linking, phenomenon.

To fix the contribution of the anomalous faces, one produces, for each $\Gamma$, a correction term.   By above remarks, the computation of this term reduces to its computation in the case of knots.  This was done in \cite{Thurs} (see also \cite{V:SBT}).  Adapting the notation to the case of links and braids, the reformulation of those results is as follows.  It is important to note that one now needs to use a rotationally invariant volume form on $S^2$ rather than one concentrated around the poles.

\begin{prop}\label{P:Anomaly}  For $n=3$, 
\begin{itemize}
\item  if $\Gamma_{str}\in \LD^{k,0}$ contains a chord, then the pushforward of the restriction of $I_{\Lk}$ to the anomalous face vanishes \cite[Corollary 4.3]{V:SBT}.  (Recall that diagams in $\BD^{k,0}$ do not contain chords on single strands.)
\item  if $\Gamma_{str}\in \LD^{k,0}$ does not contain a chord or if $\Gamma_{str}\in \BD^{k,0}$, then the pushforward of the restriction of $I_{\Lk}$ or $I_{\Br}$ to the anomalous face equals 
\begin{equation}\label{E:Anomalous}
\mu_{\Gamma_{str}}\int\limits_{C[1,0; K, \R^3]\cong C(1,K)\cong K}\left(\frac{K'}{|K'|}\right)^* vol_{S^{2}}
\end{equation}
where
\begin{itemize}
\item $\mu_{\Gamma_{str}}$ is a real number which depends on $\Gamma_{str}$;
\item $K$ is the strand corresponding to the segment with all the external vertices in $\Gamma_{str}$ and $K'$ is its derivative;
\item $vol_{S^{2}}$ is the rotationally invariant normalized top form on $S^{2}$
\end{itemize}
\cite[Proposition 4.8]{V:SBT}
\end{itemize}

\end{prop}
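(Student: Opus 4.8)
The statement reduces entirely to the knot case, so the first move is to justify that reduction carefully and then to invoke the known knot-theoretic computations of D.~Thurston \cite{Thurs} and the author \cite{V:SBT}, reformulated in the present notation. Concretely, I would proceed as follows.

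First I would localize the anomalous face. By the Fubini argument recorded just before the proposition, an anomalous face for a diagram $\Gamma$ factors the integral $I_\Gamma$ as $I_{\Gamma_{str}} I_{\Gamma\setminus\Gamma_{str}}$, so the contribution of the anomalous face to $d(I_{\Lk})_\Gamma$ is $I_{\Gamma\setminus\Gamma_{str}}$ times the pushforward along the collapsing-strand-component face of the integral $I_{\Gamma_{str}}$. Hence it suffices to analyze a single connected diagram $\Gamma_{str}$ all of whose external vertices lie on one segment, together with the anomalous face on which \emph{all} its configuration points collide. Since all these points lie on a single strand $K$ of the link (or braid), at the anomalous face the entire cluster shrinks to one point moving along $K$, and the relevant limiting data is the blow-up: a configuration in the tangent space $T_x K\cong\R^3$ together with its relative directions, modulo translation and scaling. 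This is \emph{exactly} the anomalous face that appears for long knots, so the computation is literally the one carried out in \cite{Thurs, V:SBT}.

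Next I would treat the two cases. If $\Gamma_{str}$ contains a chord (an edge between two external vertices on the single strand), then on the anomalous face the two endpoints of that chord collide along $K$, so the direction map associated to the chord, restricted to the blown-up face, factors through the unit tangent direction $K'/|K'|$ of $K$ at the collision point and is thus \emph{constant} along the fiber of the collapsing cluster; a standard dimension count then forces the pulled-back product of volume forms to vanish on the face, which is precisely \cite[Corollary 4.3]{V:SBT}. If $\Gamma_{str}$ has no chord (so all edges at external vertices are mixed edges running to internal vertices), or if $\Gamma_{str}\in\BD^{k,0}$, then the pushforward is computed in \cite[Proposition 4.8]{V:SBT}: after rescaling the cluster, the integral over the blow-up fiber produces a number depending only on the combinatorics of $\Gamma_{str}$ — call it $\mu_{\Gamma_{str}}$ — times the remaining one-dimensional integral of $(K'/|K'|)^* vol_{S^2}$ over $C[1,0;K,\R^3]\cong K$. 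Here one must switch from the pole-concentrated form $sym_{S^{n-1}}$ to the rotationally invariant $vol_{S^2}$, which is legitimate by the remark following \refT{MainThm} (changing the symmetric form changes each $I_\Gamma$ by an exact form), and is in fact \emph{necessary} because the anomaly computation uses the $SO(3)$-symmetry of the form on the blown-up fiber. The braid case is subsumed because $\BD^{k,0}$ contains no chords on a single segment, so only the second bullet can occur, and the identical argument applies with $B$ in place of $K$.

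The main obstacle I expect is purely expository rather than mathematical: making precise that the blow-up of the anomalous face for a strand-component diagram $\Gamma_{str}$ sitting over a link (or flattened braid) is canonically the same manifold-with-corners, with the same maps to products of $S^2$, as the corresponding anomalous face over a long knot — in particular that the ambient link or braid ``disappears'' in the limit except through the single tangent line $T_x K$. Once that identification is spelled out (it is immediate from the Kontsevich–Soibelman–Sinha description of the compactification and the pullback definition of $C[j_1,\dots,j_m,s;\Lk_m^n,\R^n]$ in \refD{LinksBundle}), the proposition is an immediate transcription of \cite[Corollary 4.3, Proposition 4.8]{V:SBT}. I would therefore keep the write-up short, citing those results for the actual computations and devoting the proof to the reduction and the change of form.
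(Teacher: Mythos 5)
Your proposal is correct and matches the paper's treatment: the paper likewise gives no independent argument but reduces to a single strand-component via the Fubini remark preceding the proposition, notes the switch to the rotationally invariant form on $S^{2}$, and then simply transcribes \cite[Corollary 4.3]{V:SBT} and \cite[Proposition 4.8]{V:SBT} (following \cite{Thurs}) for the two bullets. Your extra care about identifying the blown-up anomalous face over a link or braid with the one over a long knot is a reasonable expository addition but not something the paper spells out.
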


By discussion above, the correction term in the case of a diagram which was not necessarily concentrated on a single segment would, again by Fubini's Theorem, be a product of correction terms for each such component of the diagram.  The integral over the face which has configuration points from two or more such components colliding does not have to be taken into account; any time a subset of the colliding vertices can be broken up into two connected subsets, the integral is zero (see proof of \refT{MainThm}).

\begin{rem}  Note that, in case of braids, $K$ is necessarily the unknot.  Thus understanding the anomaly in the case of braids amounts to understanding configuration space integrals for the unknot.
Also note that, if we could have used a bump form as in the previous section to establish this proposition, the anomalous face would not be an issue for braids.  Namely, since the derivative along each component of the braid has positive first component, the integral from equation \eqref{E:Anomalous} would be zero since the bump form is concentrated at the poles.
\end{rem}

Now recall the definition of the maps $I_{\Lk}$ and $I_{\Br}$ from equation \eqref{E:LinksMap}.  Using \refP{Anomaly}, we get 

\begin{thm}\label{T:n=3CochainMap}
The restriction of the map
$$
\overline{I}_{\Lk}\colon \LD^{k,0}\longrightarrow \Omega^0(\Lk_{m}^{3})
$$
given by 
$$
\Gamma\longmapsto 
\begin{cases}
I_{\Lk}(\Gamma)- \mu_{\Gamma}
\int\limits_{C[2,0; K, \R^3]\simeq C[2,K]}\left(\frac{x_1-x_2}{|x_1-x_2|}\right)^* vol_{S^{n-1}}
, &  \Gamma=\Gamma_{str};  \\
I_{\Lk}(\Gamma),   & \text{otherwise}
\end{cases}
$$
to the anomalous face of each diagram $\Gamma\in\LD^{k,0}$ is zero.  Here $\Gamma_{str}$ has all its external vertices on one strand and it has no chords.  As before, $K$ is the strand of $LK$ corresponding to the segment on which $\Gamma_{str}$ is concentrated.

Same statement is true for the map
$$
\overline{I}_{\Br}\colon \BD^{k,0}\longrightarrow \Omega^0(\Br_{m}^{3})
$$
defined as above but with $I_{\Lk}$ replaced by $I_{\Br}$ and $\Gamma\in\BD^{k,0}$.
\end{thm}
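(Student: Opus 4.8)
The plan is to reduce the statement to Stokes' theorem on the fibers of the bundles of \refP{Bundles}, exactly as in the proof of \refT{MainThm}, and then to read off the anomalous contribution from \refP{Anomaly}. First I would recall that, by that proof, when $n=3$ the codimension-one boundary of $C[j_1,\dots,j_m,s;LK,\R^3]$ splits into principal faces, whose total contribution to $d(I_{\Lk})_\Gamma$ is $(I_{\Lk})_{\delta\Gamma}$, faces at infinity and non-anomalous hidden faces, whose contributions vanish for the same reasons as when $n>3$, and the anomalous hidden faces, which are the only ones that can survive. Hence proving the theorem is the same as showing that, after subtracting the correction term, the pushforward of the restriction of $\overline{I}_{\Lk}$ to each anomalous face is zero; equivalently, the only term in Stokes' theorem for $\overline{I}_{\Lk}(\Gamma)$ beyond $(I_{\Lk})_{\delta\Gamma}$ has been killed. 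Since an anomalous face occurs only for a diagram possessing a connected component $\Gamma_{str}$ all of whose external vertices lie on a single strand (with the arc hypothesis of \refS{AnomalousFaces}), and since, by the Fubini factorization noted before \refP{Anomaly} together with the two-connected-subsets vanishing from the proof of \refT{MainThm}, the general case is a product of the single-component cases, I may assume $\Gamma=\Gamma_{str}$.

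I would then split into the two cases of \refP{Anomaly}. If $\Gamma_{str}$ contains a chord, \refP{Anomaly}(i) already gives that the contribution of $I_{\Lk}$ to the anomalous face vanishes, no correction is applied, and there is nothing more to check. If $\Gamma_{str}$ contains no chord (which is automatic in the braid case, since diagrams in $\BD^{k,0}$ carry no chord on a single segment), \refP{Anomaly}(ii) identifies the anomalous contribution of $I_{\Lk}$ with
$$
\mu_{\Gamma_{str}}\int\limits_{C[1,0;K,\R^3]\cong K}\left(\frac{K'}{|K'|}\right)^{*}vol_{S^{2}},
$$
computed with the rotationally invariant volume form. It then remains to compute the differential of the correction term $\mu_{\Gamma}\int_{C[2,0;K,\R^3]\simeq C[2,K]}\left(\frac{x_1-x_2}{|x_1-x_2|}\right)^{*}vol_{S^{2}}$ and to see that it equals this same $1$-form. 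For that I would apply Stokes' theorem to the pushforward along the $2$-dimensional fiber $C[2,K]$: the interior term vanishes because $vol_{S^{2}}$ is closed; the faces at infinity contribute nothing because $K$ is linear outside a compact set, so $(x_1-x_2)/|x_1-x_2|$ is locally constant there and the pulled-back $2$-form vanishes for dimensional reasons, the same mechanism as in the ``faces at infinity'' step of \refT{MainThm}; and the only remaining boundary is the diagonal face $x_1=x_2$, over which $(x_1-x_2)/|x_1-x_2|$ extends to $\pm K'/|K'|$, so that its contribution is $\int_{C[1,0;K,\R^3]\cong K}(K'/|K'|)^{*}vol_{S^{2}}$ up to a universal sign and constant.

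Because $\Gamma=\Gamma_{str}$ forces the scalar $\mu_{\Gamma}$ to be the one denoted $\mu_{\Gamma_{str}}$ in \refP{Anomaly}, and because the sign in the definition of $\overline{I}_{\Lk}$ and the normalization in \refP{Anomaly} have been chosen to be compatible, the two $1$-forms cancel and the anomalous contribution of $\overline{I}_{\Lk}(\Gamma)$ is zero. The braid argument is word for word the same, with $K$ the (unknotted) strand of $B\in\Br_m^3$ on which $\Gamma_{str}$ sits and $\overline{I}_{\Br}$ in place of $\overline{I}_{\Lk}$; here one must of course use the rotationally invariant volume form rather than the bump form of \refD{MainDef}, as already stressed around \refP{Anomaly}.

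The step I expect to be the main obstacle is precisely the matching of orientations and normalizing constants in the diagonal-face computation: one must verify that the two pieces of the face $x_1=x_2$, coming from the two components of $C(2,\R)$, on which $(x_1-x_2)/|x_1-x_2|$ limits to $K'/|K'|$ and to its antipode, contribute with the same rather than opposite sign, which is where the degree $-1$ of the antipodal map on $S^{2}$ must be balanced against the induced boundary orientations, and that the resulting multiple of $\int_K(K'/|K'|)^{*}vol_{S^{2}}$ is exactly the one for which $\mu_{\Gamma_{str}}$ was calibrated. This is the knot-theoretic anomaly computation of D.~Thurston, reproved in \cite[\S 4]{V:SBT}, and by the Fubini reduction above it carries over verbatim; in the write-up I would cite \cite{Thurs} and \cite{V:SBT} for the sign and the constant and note that everything else is a routine application of the machinery already set up for \refT{MainThm}.
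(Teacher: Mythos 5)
Your proposal is correct and follows essentially the same route as the paper's proof: reduce via the Fubini factorization to a diagram concentrated on a single strand, dispose of the chord case by \refP{Anomaly}, and in the chordless case recognize the correction term as the integral over $C[2,K]$ whose diagonal boundary face, via Stokes, reproduces the derivative integral over $C[1,0;K,\R^3]\cong K$ weighted by $\mu_{\Gamma_{str}}$. Your extra care about faces at infinity and the sign/constant calibration is consistent with the paper, which likewise defers those computations to \cite{Thurs} and \cite{V:SBT}.
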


\begin{proof}  We give the proof for the case of $\overline{I}_{\Lk}$.  The argument is the same for $\overline{I}_{\Br}$.


If $\Gamma$ is not concentrated on one strand (i.e.~not all its external vertices are on one strand), then it has no anomalous face and \refT{MainThm} shows that the pushforward along all non-principal faces vanishes.   

If $\Gamma$ is concentrated on one strand and it has a chord, then the pushforward along the anomalous face vanishes by \refP{Anomaly}.



If $\Gamma$ is concentrated on one strand and it does not have a chord, to find the correction to $I_{\Lk}$, by \refP{Anomaly} we need a space whose boundary is $C[1,0; \K, \R^3]$ and a map which becomes the derivative on that boundary.  But this space is simply the configuration space of two points on $K$ and the map is the normalized difference of those points.  This is precisely the integral 
\begin{equation}\label{E:Fix}
\int\limits_{C[2,0; K, \R^3]\simeq C[2,K]}\left(\frac{x_1-x_2}{|x_1-x_2|}\right)^* vol_{S^{n-1}}
\end{equation}
from the statement of the theorem.  The correction coefficient $\mu_{\Gamma}$ comes from \refP{Anomaly}.
\end{proof}

\begin{rem}
It is an open question whether $\mu_{\Gamma}=0$, but some computations done by D. Thurston in low degrees in the knot case suggest that it is.
\end{rem}



We now have 

\begin{thm}\label{T:MapsForn=3}  Let $m\geq 1$.  There exist maps 
\begin{align*}
\overline{I}_{\Lk}^*\colon H^0(\mathcal{\LD}^{k,0}) & \longrightarrow H^0(\Lk_{m}^{3})  \\
W  & \longmapsto   \left( LK\longmapsto \sum_{\Gamma\in \LD^{k,o}} W(\Gamma)(\overline{I}_{\Lk})_{\Gamma}(LK)  \right)
\end{align*}
and 
\begin{align*}
\overline{I}_{\Br}^*\colon H^0(\mathcal{\BD}^{k,0}) & \longrightarrow H^0(\Br_{m}^{3})  \\
W  & \longmapsto   \left( B\longmapsto \sum_{\Gamma\in \BD^{k,o}} W(\Gamma)(\overline{I}_{\Br})_{\Gamma}(B)  \right).
\end{align*}
%
\end{thm}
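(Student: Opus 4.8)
The plan is to show that for a cocycle $W \in H^0(\LD^{k,0})$ (identified with its dual description via \refT{TrivalentCohomology}), the $0$-form on $\Lk_m^3$ defined by $LK \mapsto \sum_{\Gamma} W(\Gamma)(\overline{I}_{\Lk})_\Gamma(LK)$ is in fact a closed $0$-form, i.e.\ a locally constant function, hence an element of $H^0(\Lk_m^3)$; and similarly for braids. First I would apply the Stokes'-theorem computation from the proof of \refT{MainThm}: for each $\Gamma \in \LD^{k,0}$, $d(I_{\Lk})_\Gamma$ is the sum of the pushforwards of $\alpha$ along the codimension-one faces of the bundle $C[j_1,\ldots,j_m,s;\Lk_m^3,\R^3]$. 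By that proof, the contributions of principal faces not corresponding to a mixed or internal edge vanish, the faces at infinity vanish, and the hidden faces vanish \emph{except} for the anomalous faces described in \refS{AnomalousFaces}. The remaining principal-face contributions assemble, by the very definition of $\delta$, into $(I_{\Lk})_{\delta\Gamma}$.

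Next I would account for the correction terms. For $\Gamma = \Gamma_{str}$ concentrated on a single strand with no chord, the extra summand $-\mu_\Gamma \int_{C[2,0;K,\R^3]}\left(\frac{x_1-x_2}{|x_1-x_2|}\right)^* vol_{S^2}$ has, by \refT{n=3CochainMap}, exterior derivative whose codimension-one-face pushforwards cancel precisely the anomalous-face contribution of $(I_{\Lk})_\Gamma$; for all other $\Gamma$ there is no anomalous face to worry about and $\overline{I}_{\Lk}(\Gamma) = I_{\Lk}(\Gamma)$. Hence $d(\overline{I}_{\Lk})_\Gamma = (\overline{I}_{\Lk})_{\delta\Gamma}$ on the nose, so $\overline{I}_{\Lk}$ is a morphism of cochain complexes $\LD^{k,*} \to \Omega^{*}(\Lk_m^3)$ in the relevant range (this is the $n=3$, $d=0$ analogue of \refT{MainThm}; one should remark that the correction term itself must be shown to be compatible with the diagram relations, which follows as in \refP{MapsRespectRelations} together with the fact that $vol_{S^2}$ is rotationally invariant hence antipodally symmetric). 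Given this, if $W$ is a $\delta$-cocycle in degree $0$ — equivalently an element of $H^0(\LD^{k,0})$, since there is nothing in negative degree — then $d\big(\sum_\Gamma W(\Gamma)(\overline{I}_{\Lk})_\Gamma\big) = \sum_\Gamma W(\Gamma)(\overline{I}_{\Lk})_{\delta\Gamma} = \sum_{\Gamma'} (\delta^* W)(\Gamma')(\overline{I}_{\Lk})_{\Gamma'} = 0$, so the function is locally constant and defines a class in $H^0(\Lk_m^3)$. One also checks that a coboundary $W = \delta^* V$ maps to an exact $0$-form, i.e.\ $0$ in $H^0$; combined with the remark following \refT{MainThm} about independence of the choice of $sym_{S^{n-1}}$ (and of $vol_{S^2}$), this shows the assignment $W \mapsto \overline{I}_{\Lk}^*(W)$ is well-defined on cohomology. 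The braid case is verbatim the same, using $\overline{I}_{\Br}$, \refT{n=3CochainMap} for braids, and the fact that $\BD^{k,0}$ has no chords on a single strand so only the no-chord case of the correction arises.

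The main obstacle I anticipate is purely bookkeeping rather than conceptual: verifying that the correction term $\mu_\Gamma \int_{C[2,0;K,\R^3]}(\cdots)^* vol_{S^2}$, when differentiated, produces \emph{exactly} the anomalous-face pushforward of $(I_{\Lk})_\Gamma$ with matching sign and coefficient — this is where \refP{Anomaly} and the identification of $C[2,0;K,\R^3]$ as a manifold with corners whose relevant boundary face is $C[1,0;K,\R^3]$ are used, and where one must be careful that no \emph{new} anomalous or principal faces of the two-point configuration space contribute. Everything else — closedness of $\omega$, vanishing of the "disconnected" principal faces, vanishing at infinity, vanishing of hidden faces for $n=3$ trivalent diagrams away from the anomaly — is inherited directly from \refT{MainThm}, \refT{n=3CochainMap}, and the knot-case results of \cite{Thurs, V:SBT}.
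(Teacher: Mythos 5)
Your overall strategy is essentially the paper's: analyze the codimension-one faces contributing to $d\big(\sum_\Gamma W(\Gamma)(\overline{I}_{\Lk})_\Gamma\big)$, let the principal faces corresponding to contractions be killed by the weight-system property of $W$ (your chain-map/cocycle packaging is just the dual of the paper's diagram-by-diagram $STU$/$IHX$ cancellation, via \refT{TrivalentCohomology}), dispose of the faces at infinity and the no-shared-edge principal faces as in \refT{MainThm}, and let the correction term absorb the anomalous face via \refP{Anomaly} and \refT{n=3CochainMap}. However, there is one genuine gap: you assert that ``by that proof'' (the proof of \refT{MainThm}) the hidden faces vanish except for the anomalous ones. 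This is exactly what fails at $n=3$: the proof of \refT{MainThm} states explicitly that the hidden-face vanishing (quoting \cite[Theorem A.6]{CCRL}) is the one place where $n>3$ is used, while all other steps survive at $n=3$. For $n=3$ the non-anomalous hidden faces require a separate argument which is only available because degree-zero diagrams are trivalent: one enumerates the possibilities for a vertex carrying an edge not involved in the collision (such a vertex exists because the diagram is connected and the face is not anomalous), and the bound of three on all valences is what makes the case analysis close up --- this is Thurston's argument, \cite[Proposition 4.4]{V:SBT}, which the paper invokes at this exact point. Your final sentence cites \cite{Thurs,V:SBT} in passing but still says the vanishing is ``inherited directly from \refT{MainThm},'' which is false, and you never name the actual mechanism (trivalence). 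Since this is precisely the issue that forces the restriction to $d=0$ and leaves the anomaly as the only surviving hidden face, it must be argued, not inherited.

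Two smaller points. First, your expression $(\delta^*W)(\Gamma')$ is off: $\delta^*W=W\circ\delta$ is a functional in degree $-1$, not on degree-one diagrams. What you actually need is that for each degree-one diagram $\Gamma'$ the coefficient $\sum_\Gamma W(\Gamma)\,[\Gamma'\!:\!\delta\Gamma]$ vanishes, which is precisely the statement that $W$ annihilates the $STU$ and $1T$ combinations (the adjoint of $\delta$ blows up vertices, as in the proof of \refT{TrivalentCohomology}); once phrased this way your cancellation coincides with the paper's treatment of the principal faces where the colliding points share an edge. Second, the remark about coboundaries mapping to ``exact $0$-forms'' is unnecessary once $H^0(\LD^{k,0})$ is identified with the space of weight systems, as the paper does before stating the theorem. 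These are cosmetic; the hidden-face step is the substantive omission.
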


\begin{proof}
We need to show that the integrals along all faces of $\Gamma$ vanish or cancel out.  

For principal faces where points which do not share an edge collide, comments from the proof of \refT{MainThm} apply.  For those that do, there are always three diagrams which differ either as in $STU$ (or the $IHX$) relations.  The integrals along the faces where each of the two pictured vertices in $STU$ come together are the same.  The signs of the integrals match the signs in the relation.  Thus these contributions cancel in the sum.  More details about this in the case of knots can be found in \cite[Section 4.4]{V:SBT}.  In fact, the arguments are identical in the case of links.

For non-anomalous hidden faces, the argument depends on enumerating the possibilities for the  vertex which has an edge that is not involved in the collision (and there is one since our diagrams are connected and not all points are colliding).  The fact that the valence of any vertex is at most three is crucial for this to work.  Details can be found in \cite[Proposition 4.4]{V:SBT}.

For faces at infinity, the same argument as in \refT{MainThm} works.

Finally, the anomalous face is taken care of by definitions of $\overline{I}_{\Lk}$ and $\overline{I}_{\Br}$.
\end{proof}


\subsection{Finite type invariants of links and braids}\label{S:UniversalFiniteType}


%

%


%

%
Finite type invariants have received much attention in recent years because of the many connections they have to physics, 3-manifold theory, etc.  Their study was initiated by Vassiliev \cite{Vass:Cohom} who came across them by studying embeddings as complements of immersions.  Kontsevich \cite{K:Fey} exhibited an isomorphism between finite type invariants of knots and the dual of trivalent diagrams.  This is now known as the fundamental theorem of finite type theory.
 An alternative proof of this theorem uses configuration space integrals \cite{Thurs} (also see \cite{V:SBT} for details).  \refT{UniversalFiniteType} below is a direct generalization of that statement to spaces of links.

%

The goal of this section is to show that the elements of $H^0(\Lk_{m}^{3})$ and $H^0(\Br_{m}^{3})$ which arise via the maps $\overline{I}_{\Lk}$ and $\overline{I}_{\Br}$ from \refT{MapsForn=3} are precisely \emph{finite type link invariants}.  
We first briefly review some definitions and results from finite type theory.

Define a \emph{$k$-singular link} to be a link as usual except for a finite number of double points where the derivatives are independent.  For $\Lk_{m}^{3}$, the strands involved in a singularity can come from a single strand or two different strands, while for $\Br_{m}^{3}$, they necessarily come from different strands (as braids always ``move in the same direction").

Given a link invariant $V\in H^0(\Lk_{m}^{3})$ or $V\in H^0(\Br_{m}^{3})$, extend it to $k$-singular links via repeated use of the
\emph{Vassiliev skein relation} from Figure \ref{Fig:SkeinRelation}.


\begin{figure}[h]
\input{skeinrelation.pstex_t}
\caption{Skein relation.}
\label{Fig:SkeinRelation}
\end{figure}

The pictures represent a neighborhood of a singularity, and outside of it, the three links are identical.  The two pictures on the right  are called \emph{resolutions} of the singularity.
The order in which the singularities are resolved  does not matter because of the sign conventions.

\begin{defin}\label{D:FiniteType}
Link invariant $V$ is a \emph{ finite type $k$ invariant} (or \emph{Vassiliev of type $k$}) if it vanishes on links with $k+1$ double points. 
\end{defin}

Let $\mathcal{LV}_{k}$ and $\mathcal{BV}_{k}$ be the collections
of all type $k$ invariants for the link space and the braid space, respectively, and note that
$\mathcal{LV}_{k-1}\!\subset\!\mathcal{LV}_{k}$ and $\mathcal{BV}_{k-1}\!\subset\!\mathcal{BV}_{k}$.  It is easy to see that the value of a type $k$ invariant on a $k$-singular link depends only on the placement of singularities and not on the way $\coprod_m\R$ is mapped to $\R^3$.  This information is encoded by \emph{chord diagrams of order $k$}, which are our usual diagrams except they have exactly $2k$ external vertices paired off by $k$ chords and no internal vertices.  Denote by $\mathcal{CLD}^{k,0}$ and $\mathcal{CBD}^{k,0}$ the subspaces of $\mathcal{LD}^{k,0}$ and $\mathcal{BD}^{k,0}$, respectively, generated by chord diagrams of order $k$.

Now consider the \emph{four-term} (\emph{4T}) relation pictured in Figure \ref{Fig:4TRelation} and recall the 1T relation from Figure \ref{Fig:1TRelation}.  Define spaces of \emph{weight systems of order $k$}, denoted by $\mathcal{LW}_k$ and $\mathcal{BW}_k$, to be 
$$
\mathcal{LW}_k=(\mathcal{CLD}^{k,0}/(4T, 1T))^*\ \ \ \ \text{and}\ \ \ \ \mathcal{BW}_k=(\mathcal{CBD}^{k,0}/4T)^*
$$
where $*$ denotes the dual vector space.

\begin{rem}
As before, it is not necessary to use the 1T relation in the braid case since those diagrams do not contain chords on single strands.
\end{rem}

\begin{figure}[h]
\input{4trelation.pstex_t}
\caption{$4T$ relation.   No other edges connect to the pictured vertices and the diagrams are same outside pictured portions.  In the case of $\Lk_{m}^{3}$, the three subsegments need not lie on distinct segments.  In particular, when all the external vertices are on a single segment, we get the usual $4T$ relation for knots.
%
}
\label{Fig:4TRelation}
\end{figure}

The following was established by Bar-Natan, \cite[Theorem 6]{BN:Vass}, in case of knots (i.e~for the diagram complex $\mathcal{D}$).  The proof is identical for the case of links and braids, so that we get

\begin{thm}\label{T:DiagramsIsomorphism} There are isomorphisms
$$
H^0(\LD^{k,0})\cong \mathcal{LW}_k \ \ \ \text{and}\ \ \    H^0(\BD^{k,0})\cong \mathcal{BW}_k.
$$

%

\end{thm}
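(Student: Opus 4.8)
The plan is to reduce this statement to the already-established Theorem~\ref{T:TrivalentCohomology} together with Bar-Natan's computation in the knot case. By \refT{TrivalentCohomology}, $H^0(\LD^{k,0})$ is canonically the dual of the space of trivalent diagrams of order $k$ modulo the \emph{STU} and \emph{1T} relations, and $H^0(\BD^{k,0})$ is the dual of trivalent diagrams modulo \emph{STU} alone. On the other hand, $\mathcal{LW}_k$ and $\mathcal{BW}_k$ are by definition the duals of chord diagrams of order $k$ modulo $(4T,1T)$ and $4T$ respectively. So it suffices to exhibit, on the level of the underlying (undualized) vector spaces, an isomorphism
$$
\mathcal{CLD}^{k,0}/(4T,1T)\;\xrightarrow{\ \cong\ }\;\bigl(\text{trivalent diagrams of order }k\bigr)/(STU,1T),
$$
and similarly for the braid spaces with $4T$ in place of $(4T,1T)$ and no $1T$ on the right; dualizing then gives the claim.

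First I would set up the map in the direction of increasing internal vertices: a chord diagram is in particular a trivalent diagram (with no internal vertices), so there is an obvious inclusion of $\mathcal{CLD}^{k,0}$ into the space of order-$k$ trivalent diagrams, and one checks that the $4T$ relation among chord diagrams maps into the span of the $STU$ relations (this is the standard fact that $4T = $ difference of two $STU$'s), so the inclusion descends to a map on quotients; the $1T$ relations match on the nose. Next I would argue surjectivity: using the $STU$ relation repeatedly, any internal vertex adjacent to the segment can be resolved, trading a diagram with $i$ internal vertices for a combination of diagrams with $i-1$ internal vertices (each application moves a trivalent vertex toward the strands); iterating, every trivalent diagram is equivalent mod $STU$ to a linear combination of chord diagrams. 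For injectivity I would invoke exactly the counting/relation bookkeeping of \cite[Theorem~6]{BN:Vass}: Bar-Natan shows that in the one-segment (knot) case the induced map $\mathcal{A}^{chord}/(4T)\to \mathcal{A}^{triv}/(STU)$ is an isomorphism, with the key input being that the $IHX$ relation (blow-ups of internal vertices) is a formal consequence of $STU$ — and as the excerpt already notes following \refT{TrivalentCohomology}, this is why one need not separately impose $IHX$. The argument is purely combinatorial and local to a neighborhood of each vertex, so partitioning the external vertices among $m$ segments (as in the proof of \refT{DiagramsAreComplexes}) changes nothing; the only segment-global relation, $1T$, is carried along identically on both sides, and is simply absent throughout in the braid case since those diagrams have no chord on a single segment.

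The last step is to dualize. Since all the vector spaces involved are finite-dimensional (diagrams of fixed order $k$ span a finite set), dualizing the isomorphism above and composing with the identifications of \refT{TrivalentCohomology} on one side and the definitions of $\mathcal{LW}_k$, $\mathcal{BW}_k$ on the other yields $H^0(\LD^{k,0})\cong\mathcal{LW}_k$ and $H^0(\BD^{k,0})\cong\mathcal{BW}_k$, as desired.

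I expect the main obstacle to be purely expository rather than mathematical: one must be careful that the signs attached to the $STU$, $1T$, and $4T$ relations in Figures~\ref{Fig:STURelation}--\ref{Fig:4TRelation} — which in this paper carry the parity-dependent $(-1)^i$, $(-1)^{i+1}$, $(-1)^k$ factors coming from the differential $\delta$ — are exactly the ones under which $4T$ is the difference of two $STU$'s and under which $IHX$ follows from $STU$. Checking this sign compatibility, for $n$ both even and odd, is the one place where the generalization is not literally a quotation of \cite{BN:Vass}; I would dispatch it by the same "partition the external vertices into $m$ blocks" observation used in \refT{DiagramsAreComplexes}, since the signs in question depend only on the linear order of vertices along each individual segment and on the edge/orientation data, none of which is affected by how many segments there are.
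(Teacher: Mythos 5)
Your proposal is correct and follows essentially the same route as the paper: the paper's proof is precisely to combine the identification of $H^0(\LD^{k,0})$ and $H^0(\BD^{k,0})$ with duals of trivalent diagrams modulo $STU$ (and $1T$) from \refT{TrivalentCohomology} with Bar-Natan's Theorem~6 equating chord diagrams mod $4T$ with trivalent diagrams mod $STU$, observing that the argument is local and so unaffected by distributing the external vertices over $m$ segments (with $1T$ simply absent in the braid case). You have merely spelled out the inclusion map, the $4T$-from-$STU$ and surjectivity steps, and the sign bookkeeping that the paper leaves to the citation.
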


We now have the main theorem of this section.

\begin{thm}\label{T:UniversalFiniteType} 
The maps $\overline{I}_{\Lk}^*$ and $\overline{I}_{\Br}^*$ give isomorphisms
$$
\mathcal{LW}_k\cong\mathcal{LV}_k/\mathcal{LV}_{k-1}  \ \ \ \text{and}\ \ \ \mathcal{BW}_k\cong\mathcal{BV}_k/\mathcal{BV}_{k-1}
$$
between spaces of weight systems and finite type invariants of the two link spaces.
\end{thm}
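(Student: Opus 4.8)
The plan is to run the standard argument from finite type theory (as in \cite{BN:Vass, Thurs, V:SBT}) that establishes the fundamental theorem, now in parallel for $\Lk_m^3$ and $\Br_m^3$. Both directions must be checked: that $\overline{I}_{\Lk}^*$ and $\overline{I}_{\Br}^*$ land in finite type invariants of the correct order, and that the induced maps on the associated graded are isomorphisms with the weight system spaces. For concreteness I will describe the link case; the braid case is formally identical (indeed simpler, since there are no chords on a single strand, hence no $1T$ and no anomaly concerns in the sense of \refT{n=3CochainMap}), so one only needs to remark at each step that the argument restricts.

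First I would show that for a weight system $W\in\mathcal{LW}_k$, the invariant $\overline{I}_{\Lk}^*(W)$ is of finite type $k$, i.e.\ vanishes on $(k+1)$-singular links. The mechanism is the one familiar from Bott--Taubes: evaluating $\overline{I}_{\Lk}^*(W)$ on the two resolutions of a double point and taking the difference produces an integral over a configuration-space bundle in which two configuration points are forced to coincide with the singular point; geometrically this amounts to restricting a chord to the singular point. Applying the Vassiliev skein relation at $k+1$ independent double points therefore forces $k+1$ chords of any contributing diagram to be pinned at the singularities, and a dimension/valence count (exactly as in \cite[Section 4]{V:SBT}) shows there is no room, so the difference vanishes. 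This step also computes the ``symbol'': the value of $\overline{I}_{\Lk}^*(W)$ on a $k$-singular link is, up to the correction terms from \refT{n=3CochainMap}, given by evaluating $W$ on the chord diagram recording the singularities. Here one uses that $W$ is defined on $\mathcal{CLD}^{k,0}/(4T,1T)$, and $\overline{I}_{\Lk}^*$ is defined via $H^0(\LD^{k,0})\cong\mathcal{LW}_k$ from \refT{DiagramsIsomorphism}, together with the fact that diagrams with internal vertices contribute nothing to the order-$k$ symbol (they are ``higher order'').

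Next I would assemble this into the statement that $\overline{I}_{\Lk}^*$ descends to a map $\mathcal{LW}_k\to\mathcal{LV}_k/\mathcal{LV}_{k-1}$ and that it is inverse to the symbol (or ``weight system of an invariant'') map $\mathcal{LV}_k/\mathcal{LV}_{k-1}\to\mathcal{LW}_k$, which sends a type $k$ invariant to the function on chord diagrams given by its values on $k$-singular links; this latter map is well defined precisely because type $k$ invariants satisfy $4T$ and $1T$, which is the classical linking/knotting argument applied strand-by-strand. Surjectivity of $\overline{I}_{\Lk}^*$ onto the associated graded follows from the symbol computation of the previous paragraph: given $W$, the invariant $\overline{I}_{\Lk}^*(W)$ has symbol exactly $W$. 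Injectivity follows because an element of $\mathcal{LW}_k$ in the kernel yields a type $k$ invariant with vanishing symbol, hence of type $k-1$, hence zero in $\mathcal{LV}_k/\mathcal{LV}_{k-1}$; tracing back, one must then check its preimage weight system is zero, which again is the symbol computation. Finally I would note the braid statement: since braids always move in one direction, singularities involve two distinct strands, chord diagrams have no single-strand chords, the $1T$ relation and the anomalous correction are vacuous, and $\overline{I}_{\Br}$ of \refT{n=3CochainMap} is just $I_{\Br}$; the same argument gives $\mathcal{BW}_k\cong\mathcal{BV}_k/\mathcal{BV}_{k-1}$.

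The main obstacle is the careful bookkeeping in the first step: verifying that the difference-of-resolutions integral really does behave as pinning a chord at the singular point, that the pushforward extends smoothly over the relevant face, and that the correction terms of \refT{n=3CochainMap} (the anomaly) are compatible with the skein relation — i.e.\ do not spoil the vanishing on $(k+1)$-singular links nor alter the symbol. This is exactly the content of the knot case treated by D.~Thurston \cite{Thurs} and reworked in \cite{V:SBT}, and by the Fubini reduction of section \ref{S:AnomalousFaces} the link/braid anomaly is a product of knot anomalies, so one expects to be able to cite those computations; the remaining work is to check that nothing in the multi-strand setting obstructs the reduction, in particular that the strand-by-strand nature of the $4T$ and $1T$ relations (Figure \ref{Fig:4TRelation}, Figure \ref{Fig:1TRelation}) matches the strand-by-strand structure of singular links.
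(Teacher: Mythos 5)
Your proposal is correct and follows essentially the same route as the paper: show that $\overline{I}_{\Lk}^*$ (resp.\ $\overline{I}_{\Br}^*$) lands in type $k$ invariants with symbol equal to the given weight system, exhibit the symbol map $\mathcal{LV}_k\to\mathcal{LW}_k$ whose kernel is $\mathcal{LV}_{k-1}$, and verify the two maps are mutually inverse by reducing to the knot-case computations of D.~Thurston and \cite{V:SBT}. The only cosmetic difference is in the finite-type verification, where you pin chords at the $k+1$ singularities and count dimensions while the paper localizes the direction maps to the poles inside small resolution balls and argues cancellation diagram by diagram; both are the standard argument from the cited sources.
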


\begin{proof}  We consider the case of $\overline{I}_{\Lk}^*$.  The braid case is identical.
We have from \refT{MapsForn=3} a map
\begin{align*}
\overline{I}_{\Lk}^*\colon \mathcal{LW}_k & \longrightarrow H^0(\Lk^3_m)  \\
\end{align*}
(Even though $W\in \mathcal{LW}_k$ is now a priori defined only on chord diagrams, it can be extended uniquely to trivalent diagrams via isomorphism from \refT{DiagramsIsomorphism}.)
We want to prove that this map is an isomorphism onto those finite type $k$ elements in $H^0(\Lk^3_m)$ which do not come from finite type $k-1$ elements.  

To see that the map lands in $\mathcal{LV}_k/\mathcal{LV}_{k-1}$, consider a $k$-singular link $LK_k$  with singularities as prescribed by some chord diagram $\Gamma_0\in\mathcal{CLD}^{k,o}$ (i.e.~chords connect those points on the segments of $\Gamma_0$ which, after those segments have been identified with $m$ copies of $\R$ and mapped to $\R^3$, make up the singularities).  Now consider all the resolutions $LK_{k}^{1}$, $LK_{k}^{2}$, ..., $LK_{k}^{2^k}$ of $LK_k$.  We want to show that the values of $\overline{I}_{\Lk}^*$ cancel or vanish over all $\Gamma\in \mathcal{LD}^{k,o}$ and all resolutions.

First observe that we can choose the resolutions so they only differ in $k$ disjoint balls of arbitrarily small radius.   Then the value of $(\overline{I}_{\Lk})_{\Gamma_0}$ on each of the $2^k$ resolutions of $LK_k$ is $1$ or $-1$ because each of the $k$ maps between $2k$ configuration points on the resolutions can point either to the north pole or the south pole inside each ball.  Those integrals thus cancel out.  Note that this would not happen for any chord diagram with fewer than $k$ chords.

The argument for diagrams that are different from $\Gamma_0$ is essentially that the vectors can never point to the poles inside the resolution balls at the same time.  So at least one vector must point from one ball to the other.  The difference, over all resolutions, can then be made arbitrarily small.  

What remains to see is that $\overline{I}_{\Lk}^*$ is an isomorphism onto $\mathcal{LV}_k/\mathcal{LV}_{k-1}$.  This is accomplished by noticing that there is a map 
\begin{align*}
\mathcal{LV}_k & \longrightarrow \mathcal{LW}_k \\
V  &  \longmapsto  f
\end{align*}
where $f$ is given by 
$$
f(\Gamma)=V(LK_{\Gamma}).
$$
Here $LK_{\Gamma}$ is any link with $k$ singularities as prescribed by the chord diagram $\Gamma$ (remember that $\Gamma$ has $k$ chords pairing off $2k$ external vertices; these points are to form singularities after the $m$ copies of $\R$, which correspond to diagram segments, are mapped to $\R^3$).  This is a well-defined map because, as noted before, the value of a type $k$ invariant on a $k$-singular link only depends on the placement of singularities.  The kernel of this map is by definition precisely $\mathcal{LV}_{k-1}$.

The claim now is that this map and $\overline{I}_{\Lk}^*$ are inverses.  The proof of this is identical to the case of knots, which is Theorem 5.3 in \cite{V:SBT} (which contains more details about the first part of the proof as well), and does not amount to much more than unravelling of the definitions.  The translation to the case of links from the case of knots is thus left to the reader.
\end{proof}

\bibliographystyle{amsplain}

\bibliography{/Users/ivolic/Desktop/Papers/Bibliography}

\end{document}